\newcommand{\field}[1]{\mathbb{#1}}
\newcommand{\R}{\field{R}}
\newcommand{\Z}{\field{Z}}
\newcommand\scalemath[2]{\scalebox{#1}{\mbox{\ensuremath{\displaystyle #2}}}}
\newtheorem{theorem}{Theorem}[section]
\theoremstyle{definition}
\theoremstyle{remark}
\numberwithin{equation}{section}
\begin{document}

\title[BFECC for linear hyperbolic systems w. appl. to Maxwell's equations]{Back and Forth Error Compensation and Correction Method for Linear Hyperbolic Systems With Application to the Maxwell's equations}


\author[Xin Wang]{Xin Wang}
\address{School of Mathematics, Georgia Institute of Technology,
Atlanta, GA 30332}
\curraddr{}
\email{xwang320@math.gatech.edu}

\author[Yingjie Liu]{Yingjie Liu}
\address{School of Mathematics, Georgia Institute of Technology,
Atlanta, GA 30332}
\curraddr{}
\email{yingjie@math.gatech.edu}


\date{}

\dedicatory{}


\begin{abstract}
We study the Back and Forth Error Compensation and Correction (BFECC) method for linear hyperbolic PDE systems. The BFECC method has been applied to schemes for advection equations to improve their stability and order of accuracy. Similar results are established in this paper for schemes for linear hyperbolic PDE systems with constant coefficients. We apply the BFECC method to central difference scheme and Lax-Friedrichs scheme for the Maxwell's equations and obtain second order accurate schemes with larger CFL number than the classical Yee scheme. The method is further applied to schemes on non-orthogonal unstructured grids. The new BFECC schemes for the Maxwell's equations operate on a single non-staggered grid and are simple to implement on unstructured grids. Numerical examples are given to demonstrate the effectiveness of the new schemes. 
\end{abstract}

\maketitle\thispagestyle{empty}


\section{Introduction}
The goal of this paper is to study finite difference schemes for the Maxwell's equations that are based on the back and forth error compensation and correction (BFECC) method \cite{dupont2003back}. Extensive studies have been done on finite difference time domain (FDTD) schemes for the Maxwell's equation \cite{taflove2005computational}. Compared with other methods, for example finite element schemes, FDTD methods are very efficient, easy to implement, and are able to model behaviors over all frequencies simultaneously \cite{taflove2005computational}. The classical Yee scheme \cite{yee1966numerical} is originally designed for uniform orthogonal grids. For non-uniform orthogonal grids, Yee scheme is known to be second order globally (though the local truncation error is first order) \cite{monk1994convergence, monk1994error}. It can also be generalized for irregular nonorthogonal grids, such as the Nonorthogonal FDTD scheme \cite{palandech1992modeling}, the Generalized Yee scheme \cite{gedneygeneralized} and the Overlapping Yee scheme \cite{liu2009overlapping}. These schemes require generation of (nonorthogonal or unstructured) staggered grids for $\bm{E}$ and $\bm{H}$ and the formulation and implementation on the unstructured staggered grids can be complicated. In this paper, we propose a simple finite difference scheme based on the BFECC method that requires very few modifications when changing from uniform non-staggered grids to unstructured non-staggered grids. 

Back and Forth Error Compensation and Correction (BFECC) method is introduced in \cite{dupont2003back, dupont2007back} to obtain a higher order scheme based on a lower order scheme for advection equations. Given a scheme for advection equations, the idea of BFECC method is to improve its accuracy by estimating using forward and then backward advections and correcting its leading order error. Suppose $\mathcal{L}$ is a $r$-th order linear scheme for scalar linear advection equations, where $r$ is an odd integer, the in general the BFECC scheme based on $\mathcal{L}$ is $(r+1)$-th order accurate, and is stable as long as scheme $\mathcal{L}$ has an amplification factor no more than $2$, thus has a larger CFL number than $\mathcal{L}$ \cite{dupont2003back, dupont2007back}. In this paper, we extend the BFECC method to linear hyperbolic systems, and show that similar accuracy and stability improvement can be achieved.

The BFECC method has been applied to level set interface computation and fluid simulations \cite{dupont2003back, dupont2007back, kim2007simulation, kim2005flowfixer, kim2007advections}. A two-step unconditionally stable MacCormack scheme and its generalization are developed in \cite{selle2008unconditionally} for fluid simulations. The property that BFECC stabilizes even an unstable scheme (with its amplification factor no more than $2$) is very helpful for systems because one doesn't have to compute the local characteristic information for constructing a low diffusion stable scheme. With the new extension to linear hyperbolic systems, we propose BFECC schemes for the Maxwell's equations which are second order accurate, easy to implement, and have larger CFL numbers than that of the classic Yee scheme \cite{yee1966numerical}. Given the accuracy improving ability of the BFECC method, we propose to use a simple first order scheme that is based on the least square local linear approximation as the underlying scheme for BFECC on unstructured grids which is very easy to implement after being stabilized. Numerical examples show that the scheme remains to be second order on non-orthogonal grids.

The rest of the paper is organized as follows. In Section \ref{sec:bfecc_system}, we discuss the BFECC method for linear hyperbolic PDE systems and prove the stability and accuracy theorems. In Section \ref{sec:bfecc_maxwell}, we apply BFECC to the Maxwell's equations. On uniform orthogonal grids, we use central difference and Lax-Friedrichs schemes as the underlying schemes for the BFECC method. Order of accuracy and CFL numbers for the corresponding schemes are discussed. On unstructured grids, we present a first order scheme based on the least square local linear approximation and use it as the underlying scheme. The divergence of the magnetic field and the perfectly matched layer \cite{berenger1994perfectly} implementation are also discussed. Numerical examples are presented in Section \ref{sec:num_examples}. We conclude the paper in Section \ref{sec:conclusion}. A detailed error analysis of the BFECC applied to the central difference scheme is presented in the appendix. 

\section{BFECC method for homogeneous linear hyperbolic PDE systems with constant coefficients}\label{sec:bfecc_system}

In this section, we discuss the BFECC method for homogeneous linear hyperbolic PDE systems with constant coefficients. Denote $\bm{u}(\bm{x}, t)$ the vector of unknown functions, where $\bm{x} = (x_1, x_2, ..., x_d)^T \in \R^d$ and $t \in \R$ are the spatial and temporal variables. Consider a homogeneous linear hyperbolic PDE system with constant coefficients in the following form:
\begin{align}\label{eq:hyperbolic_system}
\partial_t \bm{u} + \sum_{i = 1}^{d} A_i \partial_{x_i} \bm{u} = 0,
\end{align}
where $A_i, i = 1, 2, ..., d$ are real constant matrices, and any linear combination $\sum_{i = 1}^{d} \alpha_i A_i$ is diagonalizable with real eigenvalues. When all the coefficient matrices $A_i$ are symmetric, we say it is a symmetric linear hyperbolic system.  

We solve this system numerically with a finite difference scheme. For simplicity of discussion, we assume a uniform orthogonal grid is used and discuss the scheme in the whole space. Denote the mesh sizes
\begin{equation*}
\Delta \bm{x} = ( \Delta x_1, \Delta x_2, ..., \Delta x_d ),
\end{equation*}
and $\Delta t_n = t_{n+1} - t_{n}$ (we omit subscript $n$ when $\Delta t_n$ is the same for all $n$). Denote the numerical solution
\begin{equation*}
\bm{U}_{\bm{j}}^{n} \approx \bm{u}(j_1 \Delta x_1, j_2 \Delta x_2, ..., j_d \Delta x_d, t_n),
\end{equation*}
where $\bm{j} = (j_1, j_2, ..., j_n)$ is the multi-index vector. Denote $\bm{U}^n = \left\lbrace \bm{U}^{n}_{\bm{j}}: \forall \bm{j} \right\rbrace$ the collection of numerical solution at all grid points at the time $t_n$.

Suppose $\mathcal{L}$ is a numerical scheme for this system, i.e. 
\begin{equation*}
\bm{U}^{n+1} = \mathcal{L} \bm{U}^{n}.
\end{equation*}
In this paper, all the schemes we discussed are linear schemes, i.e. $\mathcal{L}$ is a linear operator.

We define $\mathcal{L}^{*}$ the backward update step from $t_{n+1}$ to $t_{n}$ by applying $\mathcal{L}$ to the time-reversed system:
\begin{align*}
\partial_t \bm{u} - \sum_{i = 1}^{d} A_i \partial_{x_i} \bm{u} = 0.
\end{align*}
 
By applying the Back and Forth Error Compensation and Correction (BFECC) steps \cite{dupont2003back, dupont2007back}, we obtain a new scheme $\mathcal{L}_{BFECC}$ which updates the solution in three steps:

\begin{enumerate}
\item{\bf Solve forward}. \\
$\tilde{\bm{U}}^{n+1} = \mathcal{L} \bm{U}^{n}$.
\item{\bf Solve backward}. \\
$\tilde{\bm{U}}^{n} = \mathcal{L}^{*} \tilde{\bm{U}}^{n+1}$.
\item{\bf Solve forward with the modified solution at time $t_n$}. \\
$\bm{U}^{n+1} = \mathcal{L} \left( \bm{U}^n + \bm{e}^{(1)} \right)$, where $\bm{e}^{(1)} = \frac{1}{2} \left( \bm{U}^n - \tilde{\bm{U}}^n \right)$.
\end{enumerate}

$\bm{U}^{n}$ and $\tilde{\bm{U}}^{n}$ should have been the same if there were no numerical error. Therefore $e^{(1)}$ provides an estimate of the value lost during the forward step, which is then compensated to $\bm{U}^{n}$ before performing the final forward step. In general, for linear advection equations, BFECC can improve the order of accuracy by one for odd order schemes and also improve stabilities of the schemes (see \cite{dupont2003back, dupont2007back}). We establish similar results for systems of equations in the following theorems with the help of techniques in \cite{zhang2005analysis, dupont2007back}.

In the following discussion, we consider system (\ref{eq:hyperbolic_system}) in $\prod_{i=1}^{d} [0, 1]$ with periodic boundary conditions. And we assume the numerical scheme $\mathcal{L}$ is a linear scheme. Let $\Delta x_j = \frac{1}{N_j}$ for $j = 1, 2, ..., d$. The numerical solutions are then defined at any time on $\mathcal{D}_{\bm N} = \Z^d \cap \prod_{i=1}^{d} [0, N_j - 1]$, where $\bm{N} = (N_1, N_2, ..., N_d)$.  Let $\mathcal{F}_{\bm{N}} = \Z^d \cap \prod_{i=1}^{d} [1 - N_j, N_j - 1]$ be the set for the dual indices of the finite Fourier series. Expand $\bm{U}^n$ as a finite Fourier series 
\begin{equation*}
\bm{U}_{\bm{j}}^{n} = \sum_{\bm{k} \in \mathcal{F}_{\bm{N}}} \bm{C}_{\bm{k}}^n e^{2 \pi i \bm{k} \cdot \bm{x}_{\bm{j}}},
\end{equation*}
where $\bm{j} \in \mathcal{D}_{\bm{N}}$ and $\bm{x}_{\bm{j}} = (j_1 \Delta x_1 , j_2 \Delta x_2, ..., j_d \Delta x_d )$.

Since scheme $\mathcal{L}$ is a linear scheme, the  coefficients of the Fourier series get updated as
\begin{align*}
\bm{C}_{\bm{k}}^{n+1} = Q_{\mathcal{L}}(\bm{k}) \bm{C}_{\bm{k}}^{n},
\end{align*}
where $Q_{\mathcal{L}}(\bm{k})$ is the Fourier symbol matrix for $\mathcal{L}$. 

{\bf Remark} Note that scheme $L$ is $l^2$ stable if the spectral radius $\rho(Q_{\mathcal{L}}(\bm{k})) < 1$ for all $\bm{k} \in \mathcal{F}_{\bm{N}}$ or $Q_{\mathcal{L}}(\bm{k})$ is diagonalizable and $\rho(Q_{\mathcal{L}}(\bm{k})) \leq 1$ for all $\bm{k} \in \mathcal{F}_{\bm{N}}$.

Denote $Q_{\mathcal{L}^{*}}(\bm{k})$ the Fourier symbol matrix of $\mathcal{L}^{*}$. Then Fourier symbol matrix $Q_B$ for the BFECC scheme based on $\mathcal{L}$ is
\begin{equation*}
Q_B = Q_{\mathcal{L}} \left( I + \frac{1}{2} ( I - Q_{\mathcal{L}^{*}} Q_{\mathcal{L}}) \right).
\end{equation*}

\subsection{Stability}
In general, BFECC method improves the stability of an underlying scheme $\mathcal{L}$ for the scalar hyperbolic equation $u_t + \bm{v} \cdot \nabla u = 0$ \cite{dupont2003back, dupont2007back}. It increases the CFL numbers of conditionally stable schemes (for example, the upwind scheme) and makes unstable schemes (for example, the central difference scheme) conditionally stable. We generalize this property of BFECC method for linear hyperbolic systems with constant coefficients. The result is summarized in the following theorem.

\begin{theorem}\label{thm:stability_system}
Let $\mathcal{L}$ be a linear scheme for system \ref{eq:hyperbolic_system}. Suppose $Q_{\mathcal{L}}$ and $Q_{\mathcal{L}^{*}}$ satisfies the following conditions

\begin{itemize}
\item[1] $Q_{\mathcal{L}^{*}}(\bm{k}) = \overline{Q_{\mathcal{L}}(\bm{k})}$ for all $\bm{k} \in \mathcal{F}_{\bm{N}}$, where $\overline{Q_{\mathcal{L}}(\bm{k})}$ is its complex conjugate, and  

\item[2] $Q_{\mathcal{L}^{*}}(\bm{k}) Q_{\mathcal{L}}(\bm{k}) = Q_{\mathcal{L}}(\bm{k}) Q_{\mathcal{L}^{*}}(\bm{k})$ for all $\bm{k} \in \mathcal{F}_{\bm{N}}$, and 

\item[3] $\rm{Re} (Q_{\mathcal{L}}(\bm{k}))$ and $\rm{Im} (Q_{\mathcal{L}}(\bm{k}))$ are diagonalizable with real eigenvalues for all all $\bm{k} \in \mathcal{F}_{\bm{N}}$. 
\end{itemize}

Then 
$|\rho(Q_B(\bm{k}))| \leq 1$ for all $\bm{k} \in \mathcal{F}_{\bm{N}}$ if and only if  $|\rho(Q_{\mathcal{L}}(\bm{k}))| \leq 2$ for all $\bm{k} \in \mathcal{F}_{\bm{N}}$.
\end{theorem}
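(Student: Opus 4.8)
The plan is to work one Fourier mode at a time, reduce the three-step matrix recursion to a scalar recursion by simultaneous diagonalization, and then settle the equivalence with a one-variable estimate. Fix $\bm{k} \in \mathcal{F}_{\bm{N}}$, abbreviate $Q = Q_{\mathcal{L}}(\bm{k})$, and split $Q = R + iS$ into real and imaginary parts, so $R = \mathrm{Re}(Q)$ and $S = \mathrm{Im}(Q)$ are real matrices. By hypothesis 1, $Q_{\mathcal{L}^{*}}(\bm{k}) = \overline{Q} = R - iS$. First I would show $R$ and $S$ commute: expanding hypothesis 2, $(R+iS)(R-iS) = (R-iS)(R+iS)$, and comparing imaginary parts gives $SR - RS = RS - SR$, i.e. $RS = SR$. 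Since $R$ and $S$ are each diagonalizable with real eigenvalues (hypothesis 3) and commute, they are simultaneously diagonalizable: there is an invertible $P$ with $P^{-1}RP = \Lambda_R$ and $P^{-1}SP = \Lambda_S$ both real and diagonal.

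Next I would diagonalize the BFECC symbol. Put $D := P^{-1}QP = \Lambda_R + i\Lambda_S = \mathrm{diag}(\lambda_1,\dots,\lambda_m)$; then $P^{-1}\overline{Q}P = \overline{D}$, hence $P^{-1}(\overline{Q}Q)P = \overline{D}D = \mathrm{diag}(|\lambda_j|^2)$. Using $Q_B = Q\bigl(\tfrac32 I - \tfrac12 \overline{Q}Q\bigr)$, conjugation by $P$ yields $P^{-1}Q_B P = \mathrm{diag}\bigl(\lambda_j(\tfrac32 - \tfrac12|\lambda_j|^2)\bigr)$. Therefore $\rho(Q_{\mathcal{L}}(\bm{k})) = \max_j |\lambda_j|$ and $\rho(Q_B(\bm{k})) = \max_j g(|\lambda_j|)$, where $g(r) := \tfrac{r}{2}\,|3 - r^2|$ for $r \ge 0$.

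It then remains to check the scalar fact that $g(r) \le 1$ if and only if $0 \le r \le 2$. On $[0,\sqrt3\,]$ one has $g(r) = \tfrac12(3r - r^3)$, which increases from $g(0)=0$ to $g(1)=1$ and then decreases to $g(\sqrt3)=0$; on $[\sqrt3,\infty)$ one has $g(r) = \tfrac12(r^3 - 3r)$, which increases from $0$ and passes through $g(2)=1$, so $g(r) > 1$ for $r > 2$. Thus $\{\,r \ge 0 : g(r) \le 1\,\} = [0,2]$. Applying this to each modulus $|\lambda_j|$ gives $\rho(Q_B(\bm{k})) \le 1 \iff |\lambda_j| \le 2$ for all $j \iff \rho(Q_{\mathcal{L}}(\bm{k})) \le 2$. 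Since this equivalence holds for every $\bm{k}$ separately, the quantified statement follows at once: $\rho(Q_B(\bm{k})) \le 1$ for all $\bm{k} \in \mathcal{F}_{\bm{N}}$ if and only if $\rho(Q_{\mathcal{L}}(\bm{k})) \le 2$ for all $\bm{k} \in \mathcal{F}_{\bm{N}}$.

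The step I expect to be the main obstacle is the simultaneous diagonalization — that is, correctly deriving $RS = SR$ from hypothesis 2 and invoking the standard fact that commuting diagonalizable matrices share an eigenbasis; note the argument only needs $R$ and $S$ to be diagonalizable, not $Q$ itself, which is precisely what hypothesis 3 provides. Everything afterward is a diagonal-matrix computation together with the elementary one-variable analysis of $g$, so I anticipate no further difficulties; one only has to keep in mind the harmless degenerate cases such as $\lambda_j = 0$.
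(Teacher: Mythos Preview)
Your proposal is correct and follows essentially the same approach as the paper: split $Q_{\mathcal{L}}$ into its real and imaginary parts, deduce their commutativity from hypothesis~2, simultaneously diagonalize them using hypothesis~3, read off the eigenvalues of $Q_B$ as $\lambda_j\bigl(\tfrac32-\tfrac12|\lambda_j|^2\bigr)$, and finish with the one-variable analysis of $g(r)=\tfrac{r}{2}|3-r^2|$. The only cosmetic difference is that the paper phrases the scalar step via $f(\zeta)=|1+\tfrac12(1-\zeta^2)|\zeta$, which is the same function.
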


\begin{proof}
	We first show that $\lambda_j(Q_B) = \left( 1 + \frac{1}{2} ( 1- |\lambda_j(Q_{\mathcal{L}})|^2) \right) \lambda_j(Q_{\mathcal{L}})$ under the assumptions in the theorem, where $\lambda_j(Q_{\mathcal{L}})$ and $\lambda_j(Q_B)$ are eigenvalues of $Q_{\mathcal{L}}$ and $Q_B$, respectively, $j = 1, 2, ..., d$ 
	
	Let $X = \rm{Re}(Q_{\mathcal{L}})$ and $Y = \rm{Im}(Q_{\mathcal{L}})$. Since $\bar{Q}_{\mathcal{L}} Q_{\mathcal{L}} = Q_{\mathcal{L}} \bar{Q}_{\mathcal{L}}$, we have
	\begin{equation*}
	(X - iY)(X + iY) = (X + iY)(X - iY) \Rightarrow X Y = Y X.
	\end{equation*}
	Since $X$ and $Y$ are diagonalizable with real eigenvalues and they commute, there is a basis set of real eigenvectors $\{v_j\}_{j = 1, 2, ..., n}$ that diagonalizes $X$ and $Y$ simultaneously. Then $v_i$'s are also eigenvectors of $Q_{\mathcal{L}}$ and $\bar{Q}_{\mathcal{L}}$, and the corresponding eigenvalues are complex conjugate of each other, i.e. $\lambda_j(\bar{Q}_{\mathcal{L}}) = \bar{\lambda}_j(Q_{\mathcal{L}})$ for $j = 1, 2, ..., d$. 
	
	By the assumption $Q_{\mathcal{L}^{*}} = \bar{Q}_{\mathcal{L}}$, we get $Q_B = Q_{\mathcal{L}} \left( I + \frac{1}{2} ( I - \bar{Q}_{\mathcal{L}} Q_{\mathcal{L}}) \right)$, and thus 
	\begin{equation*}
	\lambda_j(Q_B) = \left( 1 + \frac{1}{2} ( 1- |\lambda_j(Q_{\mathcal{L}})|^2) \right) \lambda_j(Q_{\mathcal{L}})
	\end{equation*}
	for $j = 1, 2, ..., d$.

Let $\zeta = |\lambda_j(Q_{\mathcal{L}})|$.By studying the function $f(\zeta) = | 1 + \frac{1}{2} ( 1 - \zeta^2) | \zeta$ for $\zeta \in [0, \infty)$, we see that $|f(\zeta)| \leq 1$ if and only if $\zeta \leq 2$, i.e. $|\lambda_j(Q_B)| \leq 1$ if and only if $|\lambda_j(Q_{\mathcal{L}})| \leq 2$, therefore the conclusion of the theorem follows.
\end{proof}

{\bf Remarks}
\begin{itemize}
\item[1.] Under the assumption of the theorem, Fourier symbol matrix $Q_B$ has a complete (real) eigenvector basis, so $| \rho(Q_B) | \leq 1$ implies $l^2$ stability.

\item[2.] Condition 1 follows the same assumption in the BFECC method for advection equations \cite{dupont2007back}, condition 2 requires that the scheme treats backward temporal direction the same as forward temporal direction, and condition 3 usually follows from the diagonalizability of coefficient matrix of the system. In particular, these assumptions on $Q_{\mathcal{L}}$ and $Q_{\mathcal{L}^*}$ are satisfied for several classical schemes. For example, consider the following one dimensional hyberbolic system
\begin{equation*}
\partial_t \bm{u} + A \partial_x \bm{u} = 0,
\end{equation*}
where $A$ is diagonalizable with real eigenvalues. 

Let $\mathcal{L}$ represent the central difference scheme for this system, i.e. 
\begin{align*}
\frac{\bm{U}^{n+1}_j - \bm{U}^{n}_j}{\Delta t} + A \frac{\bm{U}^{n}_{j+1} - \bm{U}^{n}_j}{2 \Delta x} = 0.
\end{align*}
Let $\lambda = \Delta t / \Delta x$, then
\begin{equation*}
Q_{\mathcal{L}}(k) = I - i \lambda \sin(2 \pi k h) A \, \text{ and } Q_{\mathcal{L}^{*}}(k) = I + i \lambda \sin(2 \pi k h) A.
\end{equation*}
Here $h = \Delta x$. We will continue to denote the spatial mesh size by $h$ when there is no ambiguity. 

Let $\mathcal{M}$ represents the Lax-Friedrichs scheme for this system, then
\begin{equation*}
Q_\mathcal{M}(k) = \cos(2 \pi k h) I - i \lambda \sin(2 \pi k h) A,
\end{equation*}
and 
\begin{equation*}
Q_{\mathcal{M}^{*}}(k) =  \cos(2 \pi k h)I + i \lambda \sin(2 \pi k h) A.
\end{equation*}
It is easy to see both schemes satisfy the assumptions of the theorem. 
  
\item[3.] A easier-to-check (but more restrictive) alternative for condition 3 in the theorem is to require $Q_{\mathcal{L}}$ being complex symmetric. This implies $X$ and $Y$ are real symmetric matrices, so they are diagonalizable with real eigenvalues. We will show in Section~\ref{sec:bfecc_maxwell} that this condition is satisfied for the central difference scheme and Lax-Friedrichs scheme for the Maxwell's equations.  
\end{itemize}

\subsection{Accuracy}
In general, BFECC method improves the accuracy of odd order schemes for advection equations \cite{dupont2003back, dupont2007back}. We extend this result to linear hyperbolic PDE systems with constant coefficients. 

Expand the solution into Fourier series
\begin{equation*}
\bm{u}(t, \bm{x}) = \sum_{\bm{k} \in \mathbb{Z}^d} \bm{C}_{\bm{k}}(t) e^{2 \pi i \bm{k} \cdot \bm{x}}
\end{equation*}

and plug in system (\ref{eq:hyperbolic_system}) to obtain
\begin{align*}
& \frac{\partial }{\partial t} \bm{C}_{\bm{k}}(t) = \left( - 2 \pi i \sum_{j = 1}^{d} k_j A_j \right) \bm{C}_{\bm{k}}(t) = P(i \bm{k}) \bm{C}_{\bm{k}}(t),
\end{align*}
where $P(i \bm{k})$ is a matrix with entries that are homogeneous linear polynomials in $i \bm{k}$ with real coefficients, $\bm{k} = (k_1, k_2, ..., k_d)^T$. Therefore
\begin{align*}
\bm{C}_{\bm{k}}(t + \Delta t) = e^{\Delta t P(i \bm{k})} \bm{C}_{\bm{k}}(t).
\end{align*}

Assume $\Delta x_1 = \Delta x_2 = ... = \Delta x_d = h$, and fix $\Delta t / h$ during the mesh refinement. We first quote a theorem of Lax \cite{lax1961stability}, 

\begin{theorem}\label{thm:lax}
For the linear hyperbolic PDE system (\ref{eq:hyperbolic_system}) with constant coefficients, a scheme $\mathcal{L}$ is $r$-th order accurate if and only if its Fourier symbol matrix $Q_{\mathcal{L}}$ satisfies
\begin{equation*}
Q_{\mathcal{L}} (\bm{k}) = e^{\Delta t P(i \bm{k})} + O(| \bm{k} h |^{r + 1}), \, \text{ as } h \rightarrow 0\, \text{ for all } \bm{k} \in \mathbb{Z}^d.
\end{equation*}
\end{theorem}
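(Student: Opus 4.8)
\emph{Proof idea.} The plan is to reduce this PDE-level statement to an algebraic statement about the symbol matrix by Fourier analysis, using that both the scheme $\mathcal{L}$ and the exact evolution of (\ref{eq:hyperbolic_system}) are linear and translation invariant. Throughout I take ``$\mathcal{L}$ is $r$-th order accurate'' in the usual sense: for smooth solutions of (\ref{eq:hyperbolic_system}) the one-step (local truncation) discrepancy $\mathcal{L}\bm{u}(\cdot,t_n) - \bm{u}(\cdot,t_{n+1})$ is $O(h^{r+1})$ (equivalently $O(\Delta t^{r+1})$) as $h\to 0$ with $\Delta t/h$ fixed, which is the per-step error that combines with stability to give global order $r$. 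The reduction rests on two facts. First, on a single mode $\bm{u}(\bm{x},t_n) = \bm{C}_{\bm{k}}e^{2\pi i\bm{k}\cdot\bm{x}}$ with $\bm{C}_{\bm{k}}$ an arbitrary constant vector, the exact solution one step later is $e^{\Delta t P(i\bm{k})}\bm{C}_{\bm{k}}\,e^{2\pi i\bm{k}\cdot\bm{x}}$, from the decoupled ODE $\partial_t\bm{C}_{\bm{k}} = P(i\bm{k})\bm{C}_{\bm{k}}$ derived just above the theorem. Second, since $\mathcal{L}$ is linear with constant coefficients it maps this mode to $Q_{\mathcal{L}}(\bm{k})\bm{C}_{\bm{k}}\,e^{2\pi i\bm{k}\cdot\bm{x}}$ --- this is precisely the definition of the symbol matrix. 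Hence the one-step error on this mode equals $\bigl(Q_{\mathcal{L}}(\bm{k}) - e^{\Delta t P(i\bm{k})}\bigr)\bm{C}_{\bm{k}}\,e^{2\pi i\bm{k}\cdot\bm{x}}$, and everything comes down to controlling $R(\bm{k}) := Q_{\mathcal{L}}(\bm{k}) - e^{\Delta t P(i\bm{k})}$.

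For the ``if'' direction, assume $\|R(\bm{k})\| \le C|\bm{k}|^{r+1}h^{r+1}$ uniformly in $\bm{k}$ as $h\to 0$. For a smooth periodic datum $\bm{u}_0 = \sum_{\bm{k}}\bm{C}_{\bm{k}}e^{2\pi i\bm{k}\cdot\bm{x}}$ the one-step error is $\sum_{\bm{k}}R(\bm{k})\bm{C}_{\bm{k}}e^{2\pi i\bm{k}\cdot\bm{x}}$ (the scheme acts modewise), whose norm is at most $\sum_{\bm{k}}\|R(\bm{k})\|\,|\bm{C}_{\bm{k}}| \le Ch^{r+1}\sum_{\bm{k}}|\bm{k}|^{r+1}|\bm{C}_{\bm{k}}|$; the last sum is finite by the rapid decay of Fourier coefficients of a smooth function, so the error is $O(h^{r+1})$ and $\mathcal{L}$ is $r$-th order. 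For the ``only if'' direction, apply the $r$-th order hypothesis to the datum $\bm{C}_{\bm{k}}e^{2\pi i\bm{k}\cdot\bm{x}}$ for each fixed $\bm{k}$ and each $\bm{C}_{\bm{k}}$: the error $R(\bm{k})\bm{C}_{\bm{k}}e^{2\pi i\bm{k}\cdot\bm{x}}$ being $O(h^{r+1})$ for all $\bm{C}_{\bm{k}}$ forces $R(\bm{k}) = O(h^{r+1})$ for that $\bm{k}$.

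It remains to upgrade this pointwise-in-$\bm{k}$ estimate to the uniform bound $\|R(\bm{k})\| = O(|\bm{k}h|^{r+1})$ asserted by the theorem, and this is the only genuinely technical step. The key point is that, for a finite difference scheme, $Q_{\mathcal{L}}(\bm{k})$ is a fixed finite linear combination of terms $e^{2\pi i\bm{m}\cdot\bm{k}h}$ with matrix coefficients depending only on $\lambda = \Delta t/h$, while $e^{\Delta t P(i\bm{k})} = \exp\!\bigl(-i\lambda\sum_{j}(2\pi k_j h)A_j\bigr)$ is entire; hence $R(\bm{k}) = \widetilde{R}(\bm{\xi})$ with $\bm{\xi} := 2\pi h\bm{k}$ and $\widetilde{R}$ real-analytic near $\bm{\xi}=0$. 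Expand $\widetilde{R}(\bm{\xi}) = \sum_{j\ge 0}H_j(\bm{\xi})$ into homogeneous matrix polynomials $H_j$ of degree $j$. Then $\widetilde{R}(2\pi h\bm{k}) = \sum_{j}(2\pi h)^j H_j(\bm{k})$, and the pointwise estimate $\widetilde{R}(2\pi h\bm{k}) = O(h^{r+1})$ as $h\to 0$ forces $H_j(\bm{k}) = 0$ for $j = 0,\dots,r$ and every $\bm{k}\in\Z^d$; a polynomial vanishing on all of $\Z^d$ is identically zero, so $H_0 = \cdots = H_r \equiv 0$ and $\widetilde{R}(\bm{\xi}) = \sum_{j\ge r+1}H_j(\bm{\xi}) = O(|\bm{\xi}|^{r+1})$ near the origin by Taylor's theorem, which is exactly $\|R(\bm{k})\| = O(|\bm{k}h|^{r+1})$. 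I expect this homogeneity/Taylor bookkeeping --- together with making explicit the (automatic) smoothness of the symbol as a function of $\bm{k}h$ --- to be the main obstacle; the rest is the routine translation-invariant Fourier reduction.
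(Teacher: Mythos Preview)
Your argument is correct, and in fact the paper does not give a proof of this theorem at all: it simply quotes it from Lax~\cite{lax1961stability}, noting that the ``only if'' direction is Theorem~2.1 there (stated for variable coefficients) and that ``when the coefficients are constant, Lax's argument can also be used to show that the `if' part is also true.'' So there is nothing to compare your proof against in the paper itself.

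That said, your write-up is a clean self-contained substitute for the citation. The Fourier reduction to single modes and the ``if'' direction via rapid decay of Fourier coefficients are standard. The part you flagged as the only genuinely technical step --- upgrading the per-$\bm{k}$ estimate $R(\bm{k})=O(h^{r+1})$ to the uniform $O(|\bm{k}h|^{r+1})$ via the observation that $R(\bm{k})=\widetilde R(2\pi h\bm{k})$ with $\widetilde R$ analytic near the origin, then killing the low-order homogeneous terms $H_0,\dots,H_r$ because a polynomial vanishing on $\Z^d$ vanishes identically --- is exactly the right mechanism and is essentially what Lax does (he phrases it as matching Taylor expansions of the symbol and of $e^{\Delta t P(i\bm{k})}$ in the variable $\bm{k}h$). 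One small point worth making explicit when you write it up: the assertion that $Q_{\mathcal L}(\bm{k})$ is a function of $\bm{k}h$ alone (with coefficients depending only on $\lambda=\Delta t/h$) uses that the stencil is fixed and the grid uniform, which is assumed in the paper's setup; without this the analyticity-in-$\bm{\xi}$ argument would not go through.
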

Here the $O(| \bm{k} h |^{r + 1})$ term is a matrix whose entries are $O(| \bm{k} h |^{r + 1})$ terms as $ h \rightarrow 0$. The ``only if" part is stated in theorem 2.1 of Lax's paper \cite{lax1961stability} for linear hyperbolic systems with variable coefficients. When the coefficients are constant, Lax's argument can also be used to show that the ``if" part is also true. 

We have the following theorem, which is an extension of theorem 4 in \cite{dupont2007back} to homogeneous linear hyperbolic systems with constant coefficients.

\begin{theorem}\label{thm:accuracy_system}
Suppose $Q_{\mathcal{L}^*}(\bm{k}) = \bar{Q}_{\mathcal{L}}(\bm{k})$ for any $\bm{k} \in \Z^d$ and scheme $\mathcal{L}$ is $r$-th order accurate for system \ref{eq:hyperbolic_system} with constant coefficient matrices, where $r$ is an odd integer, then the BFECC scheme $\mathcal{L}_{BFECC}$ based on $\mathcal{L}$ is $(r+1)$-th order accurate. 
\end{theorem}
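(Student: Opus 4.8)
The plan is to argue entirely at the level of Fourier symbols, invoking Lax's theorem (Theorem~\ref{thm:lax}) at the start to turn accuracy into an expansion of $Q_{\mathcal{L}}$, and again at the end to turn the resulting expansion of $Q_B$ back into accuracy. Write $E = E(\bm{k}) := e^{\Delta t P(i\bm{k})}$ for the exact one-step solution symbol. Since the entries of $P(i\bm{k})$ are homogeneous linear polynomials in $i\bm{k}$ with real coefficients, $\overline{P(i\bm{k})} = P(-i\bm{k}) = -P(i\bm{k})$, so $\overline{E(\bm{k})} = E(\bm{k})^{-1} = E(-\bm{k})$. Because $\mathcal{L}$ is $r$-th order accurate, Theorem~\ref{thm:lax} gives $Q_{\mathcal{L}} = E + R$ with $R = R(\bm{k}) = O(|\bm{k}h|^{r+1})$ as $h\to 0$; combined with the hypothesis $Q_{\mathcal{L}^*} = \overline{Q_{\mathcal{L}}}$ and $\overline{E} = E^{-1}$ this yields $Q_{\mathcal{L}^*} = E^{-1} + \overline{R}$.

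I would then substitute these into
\begin{equation*}
Q_B = Q_{\mathcal{L}}\Bigl( I + \frac{1}{2}(I - Q_{\mathcal{L}^*}Q_{\mathcal{L}})\Bigr) = \frac{3}{2}Q_{\mathcal{L}} - \frac{1}{2}Q_{\mathcal{L}}Q_{\mathcal{L}^*}Q_{\mathcal{L}},
\end{equation*}
expand the triple product, and discard every term containing two or more factors among $R,\overline{R}$, each such term being $O(|\bm{k}h|^{2r+2})$. Using only associativity (the factors do not commute) one gets $Q_{\mathcal{L}}Q_{\mathcal{L}^*}Q_{\mathcal{L}} = E + 2R + E\overline{R}E + O(|\bm{k}h|^{2r+2})$, hence
\begin{equation*}
Q_B = E + \frac{1}{2}\bigl( R - E\overline{R}E \bigr) + O(|\bm{k}h|^{2r+2}).
\end{equation*}
Since $E = I + O(|\bm{k}h|)$ we have $E\overline{R}E - \overline{R} = (E-I)\overline{R}E + \overline{R}(E-I) = O(|\bm{k}h|^{r+2})$, so $Q_B = E + \frac{1}{2}(R - \overline{R}) + O(|\bm{k}h|^{r+2})$. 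Everything thus reduces to showing $R - \overline{R} = O(|\bm{k}h|^{r+2})$.

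This is the heart of the matter, and the one place where $r$ being odd is essential. Since $\mathcal{L}$ is a real finite difference scheme, its symbol satisfies $\overline{Q_{\mathcal{L}}(\bm{k})} = Q_{\mathcal{L}}(-\bm{k})$; together with $\overline{E(\bm{k})} = E(-\bm{k})$ this gives $\overline{R(\bm{k})} = R(-\bm{k})$. Expanding $R$ into homogeneous parts in $\bm{k}h$ about the origin, its part $R_{r+1}$ of degree $r+1$ satisfies $\overline{R_{r+1}(\bm{k})} = R_{r+1}(-\bm{k}) = (-1)^{r+1}R_{r+1}(\bm{k}) = R_{r+1}(\bm{k})$ because $r$ is odd. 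Hence the leading error term of $\mathcal{L}$ is a real matrix, $R - \overline{R}$ carries no term of degree $r+1$, and $R - \overline{R} = O(|\bm{k}h|^{r+2})$. Feeding this back, $Q_B = e^{\Delta t P(i\bm{k})} + O(|\bm{k}h|^{(r+1)+1})$, so by the ``if'' direction of Theorem~\ref{thm:lax} the scheme $\mathcal{L}_{BFECC}$ is $(r+1)$-th order accurate; this is the matrix analogue of Theorem~4 in~\cite{dupont2007back}.

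The main obstacle is the third paragraph: what is needed is a genuine cancellation of the leading error terms of the forward solve and of the conjugated backward solve, not merely that they have equal size. This cancellation rests entirely on the parity identity $\overline{R(\bm{k})} = R(-\bm{k})$ together with $r+1$ being even; for even $r$ the argument breaks, in agreement with the fact that BFECC does not raise the order of even-order schemes. A secondary technical point is that, unlike in the stability theorem, no diagonalizability or commutativity of $Q_{\mathcal{L}}$ and $Q_{\mathcal{L}^*}$ is used---only associativity in the symbol expansion---so one must organize that expansion accordingly rather than manipulate the symbols as if they were scalars.
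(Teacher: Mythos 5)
Your proof is correct and follows essentially the same route as the paper: expand $Q_{\mathcal{L}}$ and $Q_{\mathcal{L}^*}=\bar{Q}_{\mathcal{L}}$ via Theorem~\ref{thm:lax}, substitute into $Q_B$, and observe that the degree-$(r+1)$ error term is invariant under complex conjugation because $r+1$ is even (and the scheme's coefficients are real), so that the residual $\frac{1}{2}\bigl(R - E\bar{R}E\bigr)$ is $O(|\bm{k}h|^{r+2})$. The only cosmetic differences are that you derive the reality of the leading error term from the parity identity $\overline{Q_{\mathcal{L}}(\bm{k})}=Q_{\mathcal{L}}(-\bm{k})$ rather than asserting, as the paper does, that the $O(|\bm{k}h|^{r+1})$ term is a homogeneous polynomial in $i\bm{k}$ with real coefficients---the same implicit assumption in either form---and that you write $Q_B=\frac{3}{2}Q_{\mathcal{L}}-\frac{1}{2}Q_{\mathcal{L}}Q_{\mathcal{L}^*}Q_{\mathcal{L}}$ before expanding.
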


\begin{proof}

Since $\mathcal{L}$ is $r$-th order accurate, by the Theorem-\ref{thm:lax} \cite{lax1961stability}, we have
\begin{equation*}
Q_{\mathcal{L}} = e^{\Delta t P(i \bm{k})} + Q_{r + 1}(i \bm{k} h) + O(| \bm{k} h |^{r + 2}),
\end{equation*}
where $Q_{r + 1}(i \bm{k} h)$ is a matrix with entries that are homogeneous degree $r+1$ polynomials in $i \bm{k}$ with real coefficients. 

By the assumption,
\begin{equation*}
Q_{\mathcal{L}^{*}} = \bar{Q}_{\mathcal{L}} = e^{- \Delta t P(i \bm{k})} + Q_{r + 1}(i \bm{k} h) + O(| \bm{k} h |^{r + 2}).
\end{equation*}

Then 
\begin{align*}
\bar{Q}_{\mathcal{L}} Q_{\mathcal{L}} = I + e^{- \Delta t P(i \bm{k})} Q_{r + 1}(i \bm{k} h)  + Q_{r + 1}(i \bm{k} h) e^{ \Delta t P(i \bm{k})} + O(| \bm{k} h |^{r + 2}).
\end{align*}

The Fourier symbol matrix $Q_B$ for $\mathcal{L}_{BFECC}$ is
\begin{align*}
Q_B & = Q_{\mathcal{L}} \left( I + \frac{1}{2} (I - \bar{Q}_{\mathcal{L}} Q_{\mathcal{L}}) \right)\\
  & = \left( e^{ \Delta t P(i \bm{k})} + Q_{r + 1}(i \bm{k} h) + O(| \bm{k} h |^{r + 2}) \right) \cdot \\
  & \left[ I - \frac{1}{2} \left(e^{- \Delta t P(i \bm{k})} Q_{r + 1}(i \bm{k} h)  + Q_{r + 1}(i \bm{k} h) e^{ \Delta t P(i \bm{k})}  \right) + O(| \bm{k} h |^{r + 2}) \right] \\
  & = e^{ \Delta t P(i \bm{k})} + \frac{1}{2} \left( Q_{r + 1}(i \bm{k} h)  - e^{ \Delta t P(i \bm{k})} Q_{r + 1}(i \bm{k} h) e^{ \Delta t P(i \bm{k})} \right) + O(| \bm{k} h |^{r + 2}) \\
  & = e^{ \Delta t P(i \bm{k})} + O(| \bm{k} h |^{r + 2}).
\end{align*}
Therefore $\mathcal{L}_{BFECC}$ is a $(r+1)$-th order accurate scheme.
\end{proof}

\subsection{Alternative view of BFECC method for hyperbolic PDE systems.}
In some cases, we can view the BFECC method for systems as applying the BFECC method for advection equations to the Riemann invariants. 

Consider a one dimensional hyperbolic PDE system with constant coefficients
\begin{align}\label{eq:u_eqn}
\partial_t \bm{u} + A \partial_x \bm{u} = 0.
\end{align}
For a hyperbolic system, the coefficient matrix $A$ is diagonalizable. Let 
$A = V \Lambda V^{-1}$, where $\Lambda$ is a diagonal matrix with eigenvalues of $A$ as entries, define $\bm{w} = V^{-1} \bm{u}$, then the system is equivalent to 
\begin{align}\label{eq:w_eqn}
\partial_t \bm{w} + \Lambda \partial_x \bm{w} = 0.
\end{align}

Suppose now we have a $r$-th order scheme $L$ for system (\ref{eq:w_eqn}), with $r$ being odd,
\begin{equation*}
\bm{W}^{n+1} = L \bm{W}^n.
\end{equation*}
Note that this scheme updates each component $W_i$ independently from other components. Then it gives a $r$-th order scheme $M$ for system-\ref{eq:u_eqn}, 
\begin{equation*}
\bm{U}^{n+1} = V \bm{W}^{n+1} = VL \bm{W}^n = VLV^{-1} \bm{U}.
\end{equation*}

By theorem 4 in \cite{dupont2007back}, applying BFECC to $L$ produces an $(r+1)$-th order scheme: 
\begin{equation*}
L_{B} = L \left( I + \frac{1}{2} ( I - \bar{L} L ) \right).
\end{equation*}

Applying BFECC to $M$ gives us
\begin{align*}
M_B  = M \left( I + \frac{1}{2} ( I - \bar{M} M ) \right) = V L_B V^{-1},
\end{align*}
therefore it is an $(r+1)$-th order scheme for system (\ref{eq:u_eqn}) following the results for scalar equations for $L_{B}$

However, not all schemes for system (\ref{eq:u_eqn}) come from schemes for system (\ref{eq:w_eqn}) that update components of $\bm{w}$ independently. Also, it is numerically more costly to decouple the system, especially in multi dimensions. In these cases, theorem \ref{thm:stability_system} and theorem \ref{thm:accuracy_system} provide the stability and accuracy improvement results. 

\section{BFECC schemes for the Maxwell's equations}\label{sec:bfecc_maxwell}

In this section, we discuss the BFECC schemes for the Maxwell's equations. We show that BFECC turns the central difference scheme and Lax-Friedrichs scheme into stable second order accurate schemes with larger CFL numbers than that of the Yee scheme on uniform rectangular grids. On non-orthogonal and unstructured grid, we discuss schemes based on least square linear approximation. 

Consider the dimensionless Maxwell's equations in a medium with zero conductivity \cite{taflove2005computational}
\begin{align}\label{eqn:maxwell_with_permittivity}
\begin{split}
& \epsilon_r \frac{\partial \bm{E}}{\partial t} = \nabla \times \bm{H} \\
& \mu_r \frac{\partial \bm{H}}{\partial t} = - \nabla \times \bm{E},
\end{split}
\end{align}
where $\epsilon_r$ and $\mu_r$ are the relative permittivity and permeability, respectively. We assume they are constant in the following discussion.

Let $\bm{E}'(t, \bm{x}) = \sqrt{\epsilon_r} \bm{E} (\sqrt{\epsilon_r \mu_r} t, \bm{x})$, $\bm{H}' (t, \bm{x}) = \sqrt{\mu_r} \bm{H}(\sqrt{\epsilon_r \mu_r} t, \bm{x})$, then the equations for $\bm{E}'$ and $\bm{H}'$ are
\begin{align*}
\begin{split}
&  \frac{\partial \bm{E}'}{\partial t} =  \nabla \times \bm{H}' \\
&  \frac{\partial \bm{H}'}{\partial t} = - \nabla \times \bm{E}'.
\end{split}
\end{align*}
To simplify the discussion for schemes, we use this Maxwell's equations in this section and refer to $\bm{E}'$ and $\bm{H}'$ as $\bm{E}$ and $\bm{H}$. 
\begin{align}\label{eqn:maxwell_without_permittivity}
\begin{split}
&  \frac{\partial \bm{E}}{\partial t} = \nabla \times \bm{H} \\
&  \frac{\partial \bm{H}}{\partial t} = -\nabla \times \bm{E}.
\end{split}
\end{align}

Note that in vacuum, we have $\epsilon_r = \mu_r = 1$, so system (\ref{eqn:maxwell_with_permittivity}) becomes (\ref{eqn:maxwell_without_permittivity}).

\subsection{BFECC based on the central difference scheme -- one dimensional case}
For simplicity, we consider Maxwell's equations in bounded domain $[0, 1]$ with periodic boundary conditions. The dimensionless Maxwell's equations in one dimensional free space are: 
\begin{align*}
\frac{\partial H_y}{ \partial t} = \frac{\partial E_z}{ \partial x} \\
\frac{\partial E_z}{ \partial t} = \frac{\partial H_y}{ \partial x}.
\end{align*}
For simplicity, denote $E = E_z, H = H_y$ and we have:
\begin{align*}
\frac{\partial H}{ \partial t} = \frac{\partial E}{ \partial x} \\
\frac{\partial E}{ \partial t} = \frac{\partial H}{ \partial x}. 
\end{align*}

The central difference scheme on a uniform rectangular grid for the above system is: 
\begin{align}\label{scheme:CD_1d}
\begin{split}
E^{n+1}_{j} = E^n_j + \frac{\lambda}{2} (H^n_{j+1} - H^n_{j-1}) \\ 
H^{n+1}_{j} = H^n_j + \frac{\lambda}{2} (E^n_{j+1} - E^n_{j-1}) 
\end{split}
\end{align}
where $\lambda = \Delta t / \Delta x$, $E_j^{n}$ and $H_{j}^{n}$ denote the numerical solutions $E_j^{n} \approx E(j \Delta x, t_n)$ and $H_j^{n} \approx H(j \Delta x, t_n)$.  

With periodic boundary conditions, $E^n_j$ and $H^n_j$ can be expanded uniquely as finite Fourier series: 
\begin{align*}
E^n_j = \sum_{k \in \mathcal{F}_N} C^n_k e^{2 \pi i k x_j} \\
H^n_j = \sum_{k \in \mathcal{F}_N} D^n_k e^{2 \pi i k x_j}
\end{align*}
where $k \in \mathcal{F}_{N}$ is the dual index, $C^n_k$ and $D^n_k$ are the Fourier coefficients for $E$ and $H$, respectively.  

Plug the finite Fourier series into the central difference scheme, we get
\begin{align*}
\begin{pmatrix}
C^{n+1}_k \\
D^{n+1}_k
\end{pmatrix}
 = Q_{\mathcal{L}} \begin{pmatrix}
C^{n}_k \\
D^{n}_k
\end{pmatrix}
= \begin{pmatrix}
1 & i \lambda \sin( 2 \pi k h ) \\
i \lambda \sin( 2 \pi k h ) & 1
\end{pmatrix}
\begin{pmatrix}
C^{n}_k \\
D^{n}_k
\end{pmatrix},
\end{align*}
where $Q_{\mathcal{L}}$ is the Fourier symbol matrix. Since the spectral radius of $Q_{\mathcal{L}}$ is greater than $1$ for most $\bm{k} \in \mathcal{F}_{\bm{N}}$, the central difference scheme is a first order scheme that is unstable and cannot be directly used to solve the Maxwell's equations. Applying BFECC method to the central difference scheme stabilizes it and also improves the order of accuracy to second order. 

Solving Maxwell's equations in the backward temporal direction is equivalent to changing $\lambda$ to $-\lambda$ in the scheme, therefore $Q_{\mathcal{L}^{*}}$ = $\overline{Q_{\mathcal{L}}}$. An easy calculation shows that $Q_{\mathcal{L}^{*}} Q_{\mathcal{L}} = Q_{\mathcal{L}}Q_{\mathcal{L}^{*}}$. The real and imaginary part of $Q_{\mathcal{L}}$ are both diagonalizable with real eigenvalues. Therefore the conditions of theorem \ref{thm:stability_system} and \ref{thm:accuracy_system} are satisfied. We see that BFECC based on the central difference scheme is $2$nd order accurate and $l^2$ stable if and only if $\rho(Q_{\mathcal{L}}) \leq 2$. Since the eigenvalues of $Q_{\mathcal{L}}$ are $1 \pm i \lambda \sin( 2 \pi k h)$, the stability condition reduces to $\max_{k \in \mathcal{F}_N} \left( 1 + \lambda^2 \sin^2( 2 \pi k h) \right) \leq 4 \Leftrightarrow \lambda \leq \sqrt{3}$. Therefore BFECC based on the central difference scheme is a 2nd order accurate scheme and is stable if $\Delta t / \Delta x \leq \sqrt{3}$.

An explicit calculation of the Fourier symbol matrix can be found in appendix \ref{app:bfecc_CD_details}, which verifies that it is $2$nd order accurate and stable if $\Delta t /\Delta x \leq \sqrt{3}$.

{\bf Remark}. In Section~\ref{sec:num_examples}, we apply schemes discussed in this section to Maxwell's equations with variable permittivities. The schemes discussed in this section can be simply adapted to the case with variable permittivities. For example, for the following system
\begin{align*}
\mu \frac{\partial H_y}{ \partial t} = \frac{\partial E_z}{ \partial x} \\
\epsilon \frac{\partial E_z}{ \partial t} = \frac{\partial H_y}{ \partial x},
\end{align*}
the central difference scheme is
\begin{align*}
\begin{split}
E^{n+1}_{i} = E^n_i + \frac{\lambda}{2 \mu_i} (H^n_{i+1} - H^n_{i-1}) \\ 
H^{n+1}_{i} = H^n_i + \frac{\lambda}{2 \epsilon_i} (E^n_{i+1} - E^n_{i-1}) 
\end{split}
\end{align*}
where $\epsilon_i$ and $\mu_i$ are the permittivity permeability respectively at grid point $x_i$. Other first order underlying schemes discussed in this paper can be similarly adapted to the variable coefficient case.

\subsection{BFECC based on the central difference scheme -- two dimensional case}\label{subsec:cd_2d}
Similar to the one dimensional case, we analyze BFECC based on the central difference scheme for the dimensionless Maxwell's equations in free space in the two dimensional $\rm{TM}_{\rm{z}}$ case. For simplicity, we consider the computational domain $[0, 1] \times [0, 1]$ with periodic boundary conditions. The Maxwell's equations are
\begin{align}\label{eqn:2d_maxwell_tmz}
\begin{split}
& \frac{\partial H_x}{\partial t} = - \frac{\partial E_z}{\partial y} \\
& \frac{\partial H_y}{\partial t} =  \frac{\partial E_z}{\partial x} \\
& \frac{\partial E_z}{\partial t} = \frac{\partial H_y}{\partial x} - \frac{\partial H_x}{\partial y}.
\end{split}
\end{align}

The central difference scheme is
\begin{align*}
& (H_x)^{n+1}_{i, j} = (H_x)^{n}_{i, j} - \frac{\lambda_y}{2} \left[ (E_z)^{n}_{i, j+1} - (E_z)^{n}_{i, j-1} \right] \\
& (H_y)^{n+1}_{i, j} = (H_y)^{n}_{i, j} + \frac{\lambda_x}{2} \left[ (E_z)^{n}_{i+1, j} - (E_z)^{n}_{i-1, j} \right] \\
& (E_z)^{n+1}_{i, j} = (E_z)^{n}_{i, j} + \frac{\lambda_x}{2} \left[ (H_y)^{n}_{i + 1, j} - (H_y)^{n}_{i-1, j} \right] -  \frac{\lambda_y}{2} \left[ (H_x)^{n}_{i, j+1} - (H_x)^{n}_{i, j-1} \right],
\end{align*}
where $\lambda_x = \Delta t / \Delta x$ and $\lambda_y = \Delta t / \Delta y$. 

Expand $H_x, H_y$ and $E_z$ into Fourier series: 
\begin{align*}
(H_x)^{n}_{j_1, j_2} = \sum_{(k, l) \in \mathcal{F}_{\bm N}} C^{n}_{k, l} e^{2 \pi i (k x_{j_1} + l y_{j_2})} \\
(H_y)^{n}_{j_1, j_2} = \sum_{(k, l) \in \mathcal{F}_{\bm N}} D^{n}_{k, l} e^{2 \pi i (k x_{j_1} + l y_{j_2})} \\
(E_z)^{n}_{j_1, j_2} = \sum_{(k, l) \in \mathcal{F}_{\bm N}} E^{n}_{k, l} e^{2 \pi i (k x_{j_1} + l y_{j_2})},
\end{align*}
where $(k, l) \in \mathcal{F}_{\bm N}$ are dual indices and $C^{n}_{k, l}$, $D^{n}_{k, l}$ and $E^{n}_{k, l}$ are Fourier coefficients for $H_x$, $H_y$ and $E_z$, respectively. 

Plug into the central difference scheme $\mathcal{L}$, we get
\begin{align*}
\begin{pmatrix}
C^{n+1}_{k, l} \\
D^{n+1}_{k, l} \\
E^{n+1}_{k, l}
\end{pmatrix} = & \, Q_{\mathcal{L}}
\begin{pmatrix}
C^{n}_{k, l} \\
D^{n}_{k, l} \\
E^{n}_{k, l}
\end{pmatrix},
\end{align*}
where 
\begin{align*}
Q_{\mathcal{L}} & = \begin{pmatrix}
1 & 0 & - i \lambda_y \sin( 2 \pi l \Delta y) \\
0 & 1 &  i \lambda_x \sin( 2 \pi k \Delta x) \\
- i \lambda_y \sin( 2 \pi l \Delta y) & i \lambda_x \sin( 2 \pi k \Delta x) & 1
\end{pmatrix} \\ 
& = I +  i \begin{pmatrix}
0 & 0 & -  \lambda_y \sin( 2 \pi l \Delta y) \\
0 & 0 &   \lambda_x \sin( 2 \pi k \Delta x) \\
-  \lambda_y \sin( 2 \pi l \Delta y) &  \lambda_x \sin( 2 \pi k \Delta x) & 0
\end{pmatrix} \\
& = I + i Y,
\end{align*}
and $Y = \rm{Im}(Q_{\mathcal{L}})$. Similar to the one dimensional case, solving the equation backward in time amounts to switching the signs of $\lambda_x$ and $\lambda_y$ in the scheme. Therefore we have $Q_{\mathcal{L}^{*}} = I - i Y = \overline{Q_{\mathcal{L}}}$, and $ Q_{\mathcal{L}^{*}} Q_{\mathcal{L}} = Q_{\mathcal{L}} Q_{\mathcal{L}^{*}} = I + Y^2$. $I$ and $Y$ are both symmetric real matrices, so they are diagonalizable with real eigenvalues. the conditions for theorem \ref{thm:stability_system} and \ref{thm:accuracy_system} are satisfied, and therefore BFECC based the central difference scheme is a $2$nd order accurate scheme and is stable if $\rho(Q_{\mathcal{L}}) \leq 2$. 

The eigenvalues of $Q_{\mathcal{L}}$ are
\begin{align*}
\lambda_1 = 1, \, \lambda_{2, 3} = 1 \pm i \sqrt{\lambda_x^2 (\sin(2 \pi k \Delta x) )^2 + \lambda_y^2 (\sin(2 \pi l \Delta y))^2}.
\end{align*}
The stability condition
\begin{align*}
& \rho(Q_{\mathcal{L}}) \leq 2, \, \forall (k, l) \in \mathcal{F}_{\bm{N}} \\
\Leftarrow \, & 1 + \lambda_x^2 (\sin(2 \pi k \Delta x) )^2 + \lambda_y^2 (\sin(2 \pi l \Delta y))^2 \leq 4, \, \forall (k, l) \in \mathcal{F}_{\bm{N}}. \\
\end{align*}
It is satisfied if
\begin{align}\label{eqn:cfl}
\lambda_x^2 + \lambda_y^2 \leq 3, \, \text{ or } \Delta t \leq \frac{\sqrt{3}}{\sqrt{(1/\Delta x)^2 + (1/\Delta y)^2}}.
\end{align}
If $\Delta x = \Delta y$, then $\Delta t \leq \frac{\sqrt{3}}{\sqrt{2}} \Delta x$ is sufficient for stability, which implies a CFL factor $\frac{\sqrt{3}}{\sqrt{2}} > 1$. 

An explicit calculation of the Fourier symbol matrix for the BFECC scheme is shown in appendix \ref{app:bfecc_CD_details}.

\subsection{BFECC based on the central difference scheme -- three dimensional case}
Similar to the one and two dimensional cases, we can also check the conditions of theorem \ref{thm:stability_system} and theorem \ref{thm:accuracy_system}, and find that BFECC based on the central difference scheme is second order accurate and $l^2$ stable if
\begin{align*}
\Delta t \leq \frac{\sqrt{3}}{\sqrt{(1/\Delta x)^2 + (1/\Delta y)^2 + (1/\Delta z)^2 }}.
\end{align*}
Note that this still implies a CFL factor equal to one in three dimensions if $\Delta x = \Delta y = \Delta z$. 

We summarize the results in the following theorem.
\begin{theorem}\label{thm:CD_thm}
BFECC based on the central difference scheme for Maxwell's equations in free space on uniform rectangular grid is second order accurate. It is stable in the $l^2$ sense if
\begin{enumerate}
\item in one dimensional case, $\Delta t \leq \sqrt{3} \Delta x $; or
\item in two dimensional case, $\Delta t \leq \frac{\sqrt{3}}{\sqrt{(1/\Delta x)^2 + (1/\Delta y)^2}}$; or
\item in three dimensional case, 
$
\Delta t \leq \frac{\sqrt{3}}{\sqrt{(1/\Delta x)^2 + (1/\Delta y)^2 + (1/\Delta z)^2 }}.
$
\end{enumerate} 
\end{theorem}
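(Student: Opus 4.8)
The plan is to deduce Theorem~\ref{thm:CD_thm} from the two general results already proved, Theorem~\ref{thm:stability_system} and Theorem~\ref{thm:accuracy_system}, by checking that in each space dimension the central difference scheme $\mathcal{L}$ for the Maxwell's equations satisfies their hypotheses, and then by computing $\rho(Q_{\mathcal{L}}(\bm{k}))$ explicitly to pin down the CFL bound. The organizing observation, already visible in the one- and two-dimensional analyses above, is that forward Euler in time together with central differences in space yields a Fourier symbol of the form $Q_{\mathcal{L}}(\bm{k}) = I + i Y(\bm{k})$, where $Y(\bm{k})$ is a \emph{real symmetric} matrix whose nonzero entries are $\pm \lambda_m \sin(2\pi k_m \Delta x_m)$ coming from the discretized curl. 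Hence $Q_{\mathcal{L}}$ is complex symmetric, and by Remark~3 following Theorem~\ref{thm:stability_system} its real part $I$ and imaginary part $Y$ are trivially diagonalizable with real eigenvalues, so condition~3 of Theorem~\ref{thm:stability_system} holds in every dimension.

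For the accuracy statement I would first note that the scheme is forward Euler in time plus second order central differences in space, hence consistent of order $r = 1$, which is odd (with $\Delta t / \Delta x_m$ held fixed the temporal $O(\Delta t)$ and spatial $O(\Delta x_m^2)$ errors are both $O(h)$). Reversing the temporal direction is the same as flipping the sign of each $\lambda_m$, so $Q_{\mathcal{L}^*}(\bm{k}) = I - i Y(\bm{k}) = \overline{Q_{\mathcal{L}}(\bm{k})}$, which is exactly the hypothesis of Theorem~\ref{thm:accuracy_system}. That theorem then gives immediately that $\mathcal{L}_{BFECC}$ is $(r+1) = 2$nd order accurate, in one, two, and three dimensions, on the uniform rectangular grid.

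For stability I would verify the three conditions of Theorem~\ref{thm:stability_system}: condition~1 is the identity $Q_{\mathcal{L}^*} = \overline{Q_{\mathcal{L}}}$ just noted; condition~2 follows from $Q_{\mathcal{L}^*} Q_{\mathcal{L}} = (I - iY)(I + iY) = I + Y^2 = (I + iY)(I - iY) = Q_{\mathcal{L}} Q_{\mathcal{L}^*}$; and condition~3 was handled above. Theorem~\ref{thm:stability_system} then reduces $l^2$ stability of the BFECC scheme to the single requirement $\rho(Q_{\mathcal{L}}(\bm{k})) \le 2$ for all admissible $\bm{k}$, and since the eigenvalues of $Q_{\mathcal{L}} = I + iY$ are $1 + i\mu$ with $\mu$ a (real) eigenvalue of $Y$, this is equivalent to $\max_j \mu_j(\bm{k})^2 \le 3$ for all $\bm{k}$.

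It then remains to compute the spectrum of $Y(\bm{k})$. Writing $\bm{\xi} = \bigl(\lambda_1 \sin(2\pi k_1 \Delta x_1), \dots, \lambda_d \sin(2\pi k_d \Delta x_d)\bigr)$: in one dimension $Y$ is the $2\times 2$ matrix with zero diagonal and off-diagonal entry $\xi_1$, with eigenvalues $\pm \xi_1$; in the two-dimensional $\mathrm{TM}_\mathrm{z}$ case the eigenvalues are $0, \pm\sqrt{\xi_1^2 + \xi_2^2}$, as already recorded; and in the full three-dimensional case $Y$ has the $6\times 6$ block form $\left(\begin{smallmatrix} 0 & K \\ -K & 0 \end{smallmatrix}\right)$ with $K$ the skew-symmetric cross-product matrix $[\bm{\xi}]_\times$ (so that $K\bm{v} = \bm{\xi}\times\bm{v}$), whence $Y^2 = \mathrm{diag}(-K^2, -K^2)$ and, using $[\bm{\xi}]_\times^2 = \bm{\xi}\bm{\xi}^T - |\bm{\xi}|^2 I$, the eigenvalues of $-K^2$ are $0$ and $|\bm{\xi}|^2$ (double). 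In every case therefore $\rho(Q_{\mathcal{L}})^2 = 1 + \max_j \mu_j^2 = 1 + \sum_{m} \lambda_m^2 \sin^2(2\pi k_m \Delta x_m) \le 1 + \sum_m \lambda_m^2$, so $\rho(Q_{\mathcal{L}}) \le 2$ for all $\bm{k}$ is guaranteed once $\sum_m \lambda_m^2 \le 3$, i.e. $\Delta t \le \sqrt{3}\bigl(\sum_m 1/\Delta x_m^2\bigr)^{-1/2}$, which is precisely the stated CFL condition in each of the three cases. I expect the only genuinely nonroutine step to be this three-dimensional eigenvalue computation --- recognizing the $6\times 6$ symbol as the block curl matrix and collapsing its spectrum via the identity for $[\bm{\xi}]_\times^2$ --- and one should also note that bounding $\sin^2 \le 1$ yields only a sufficient (not sharp) CFL bound, which is all that is claimed; the explicit symbol computation in the appendix gives an independent confirmation.
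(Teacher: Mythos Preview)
Your proposal is correct and follows essentially the same route as the paper: write $Q_{\mathcal{L}}=I+iY$ with $Y$ real symmetric, verify the hypotheses of Theorems~\ref{thm:stability_system} and~\ref{thm:accuracy_system}, and then compute $\rho(Q_{\mathcal{L}})^2=1+\max_j\mu_j(Y)^2$ to extract the CFL bound. The paper treats the one- and two-dimensional symbols explicitly and then simply asserts that the three-dimensional case is similar; your block $[\bm{\xi}]_\times$ computation supplies exactly the missing detail, and the identity $-[\bm{\xi}]_\times^2=|\bm{\xi}|^2 I-\bm{\xi}\bm{\xi}^T$ is the clean way to read off the eigenvalues $0,\pm|\bm{\xi}|$ of $Y$ in that case.
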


\subsection{BFECC based on the Lax-Friedrichs scheme}
We study BFECC based on the Lax-Friedrichs scheme $\mathcal{M}$ for the Maxwell's equations. In one dimension, the scheme is
\begin{align*}
E^{n+1}_{i} = \frac{E^n_{i-1} + E^n_{i+1}}{2} + \frac{\lambda}{2} (H^n_{i+1} - H^n_{i-1}) \\ 
H^{n+1}_{i} = \frac{H^n_{i-1} + H^n_{i+1}}{2} + \frac{\lambda}{2} (E^n_{i+1} - E^n_{i-1}). 
\end{align*} 

Write the one dimensional Maxwell's equations as 
\begin{align*}
\partial_t \bm{u} = A \partial_x \bm{u},
\end{align*}
where
\begin{align*}
\bm{u} = \begin{pmatrix}
E \\ H
\end{pmatrix} \, \text{ and } A = \begin{pmatrix}
0 & 1 \\
1 & 0
\end{pmatrix}.
\end{align*}
Then the Fourier symbol matrix of the Lax-Friedrichs scheme is $Q_{\mathcal{M}} = \cos(\tilde{k} h) I + i \lambda \sin(\tilde{k} h) A$, where $\tilde{k} = 2 \pi k$ is the angular wave number. It satisfies the conditions in theorem \ref{thm:stability_system} and theorem \ref{thm:accuracy_system}, so BFECC based on the Lax-Friedrichs scheme is second order accurate and is stable if and only if $| \rho(Q_{\mathcal{M}}) | \leq 2$, i.e.,
\begin{align*}
| \rho(Q_{\mathcal{M}}) |^2 = \cos^2( \tilde{k} h) + \lambda^2 \sin^2( \tilde{k} h) \leq 4. 
\end{align*}
This is true if $\lambda^2 \leq 4$. 

For two dimensional Maxwell's equations (\ref{eqn:2d_maxwell_tmz}), write the equations as
\begin{align*}
\partial_t \bm{u} = A_1 \partial_x \bm{u} + A_2 \partial_y \bm{u},
\end{align*}
where 
\begin{align*}
\bm{u} = \begin{pmatrix}
H_x \\ H_y \\ E_z
\end{pmatrix}, \, A_1 = \begin{pmatrix}
0 & 0 & 0 \\
0 & 0 & 1 \\
0 & 1 & 0
\end{pmatrix} \, \text{ and } A_2 = \begin{pmatrix}
0 & 0 & -1 \\
0 & 0 & 0 \\
-1 & 0 & 0
\end{pmatrix}.
\end{align*}

The Lax-Friedrichs scheme is
\begin{align*}
\bm{U}^{n+1}_{i, j}  = & \frac{\bm{U}^{n}_{i-1, j} + \bm{U}^{n}_{i+1, j} + \bm{U}^{n}_{i, j-1} + \bm{U}^{n}_{i, j+1} }{4}  \\
& + \frac{\Delta t}{2 \Delta x} A_1 \left(\bm{U}^{n}_{i+1, j} - \bm{U}^{n}_{i-1, j} \right) + \frac{\Delta t}{2 \Delta y} A_2 \left(\bm{U}^{n}_{i, j + 1} - \bm{U}^{n}_{i, j - 1} \right),
\end{align*}
where $\bm{U}^{n}_{i, j} \approx \left(
H_x(t_n, i \Delta x, j \Delta y), H_y(t_n, i \Delta x, j \Delta y), E_z(t_n, i \Delta x, j \Delta y) \right)^T$. 

Its Fourier symbol matrix is 
\begin{align*}
Q_{\mathcal{M}} = \frac{1}{2} \left( \cos(\tilde{k}_x h_x) + \cos(\tilde{k}_y h_y) \right) I + i \lambda_x \sin(\tilde{k}_x h_x ) A_1 + i \lambda_y \sin(\tilde{k}_y h_y ) A_2,
\end{align*}
where $\lambda_x = \Delta t / \Delta x$, $\lambda_y = \Delta t / \Delta y$, $h_x = \Delta x$, $h_y = \Delta y$ $\tilde{k}_x = 2 \pi k_x$, $\tilde{k}_y = 2 \pi k_y$, and $(k_x, k_y) \in \mathcal{F}_{\bm{N}}$.

\begin{align*}
| \rho(Q_{\mathcal{M}}) |^2  & = \frac{1}{4} \left( \cos(\tilde{k}_x h_x) +  \cos(\tilde{k}_y h_y) \right)^2 + \lambda_x^2 \sin^2(\tilde{k}_x h_x) + \lambda_y^2 \sin^2(\tilde{k}_y h_y) \\
& \leq \frac{1}{2} \left( \cos^2(\tilde{k}_x h_x) +  \cos^2(\tilde{k}_y h_y) \right) + \lambda_x^2 \sin^2(\tilde{k}_x h_x) + \lambda_y^2 \sin^2(\tilde{k}_y h_y) \\
& \leq \max \left( \frac{1}{2}, \lambda_x^2 \right) + \max\left( \frac{1}{2}, \lambda_y^2 \right) \\
& \leq \max \left( 1, \frac{1}{2} + \lambda_x^2,  \frac{1}{2} + \lambda_y^2, \lambda_x^2 + \lambda_y^2 \right) \leq 4.
\end{align*}
Therefore if
\begin{align*}
\max(\lambda_x, \lambda_y) \leq \sqrt{\frac{7}{2}}\, \text{ and } \lambda_x^2 + \lambda_y^2 \leq 4,
\end{align*}
then $| \rho(Q_{\mathcal{M}}) | \leq 2$ for any $(k_x, k_y) \in \mathcal{F}_{\bm{N}}$. The rest of the conditions of Theorem \ref{thm:stability_system} and \ref{thm:accuracy_system} can be easily verified.

Similarly, the stability condition for Maxwell's equations in three dimensions is
\begin{align*}
\max(\lambda_x, \lambda_y, \lambda_z) \leq \sqrt{3} \, \text{ and } \lambda_x^2 + \lambda_y^2 + \lambda_z^2 \leq 4,
\end{align*}
where $\lambda_x = \Delta t / \Delta x$, $\lambda_y = \Delta t / \Delta y$ and $\lambda_z = \Delta t / \Delta z$.
 
The stability and accuracy results are summerized as follows: 
\begin{theorem}\label{thm:LF_thm}
BFECC based on the Lax-Friedrichs scheme for Maxwell's equations in free space on uniform rectangular grid is $2$nd order accurate. It is stable in the $l^2$ sense if
\begin{enumerate}
\item in one-dimensional case, $\Delta t \leq 2 \Delta x$; or
\item in two-dimensional case, $\Delta t \leq \frac{2}{\sqrt{(1/\Delta x)^2 + (1/\Delta y)^2}}$ and $\Delta t \leq \sqrt{ \frac{7}{2} } \min( \Delta x, \Delta y)$; or 
\item in three-dimensional case, 
\begin{align*}
\Delta t \leq \frac{2}{\sqrt{(1/\Delta x)^2 + (1/\Delta y)^2 + (1/\Delta z)^2 }} \, \text{ and } \Delta t \leq \sqrt{3} \min(\Delta x, \Delta y, \Delta z).
\end{align*}
\end{enumerate} 
\end{theorem}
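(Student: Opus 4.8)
The plan is to derive both assertions from the general results of Section~\ref{sec:bfecc_system} applied to the Lax--Friedrichs operator $\mathcal{M}$, so the actual work is only (a) to check the hypotheses of Theorems~\ref{thm:stability_system} and~\ref{thm:accuracy_system} for the Fourier symbol $Q_{\mathcal{M}}$, and (b) to convert the spectral condition $\rho(Q_{\mathcal{M}}(\bm{k}))\le 2$ into the CFL bounds listed in each dimension.

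First I would write the free--space Maxwell system as $\partial_t\bm{u}=\sum_{i=1}^{d}A_i\,\partial_{x_i}\bm{u}$ with the real \emph{symmetric} matrices $A_i$ exhibited above (and their three--dimensional analogues), so that the Lax--Friedrichs symbol takes the form
\begin{equation*}
Q_{\mathcal{M}}(\bm{k}) = c(\bm{k})\,I + i\sum_{i=1}^{d}\lambda_i\sin(\tilde{k}_i h_i)\,A_i,\qquad c(\bm{k}):=\tfrac{1}{d}\sum_{i=1}^{d}\cos(\tilde{k}_i h_i)\in\R.
\end{equation*}
Since $c(\bm{k})I$ is real and $\sum_i\lambda_i\sin(\tilde{k}_i h_i)A_i$ is real symmetric, $Q_{\mathcal{M}}$ is complex symmetric, so by the complex--symmetric remark following Theorem~\ref{thm:stability_system} its real and imaginary parts are diagonalizable with real eigenvalues (condition~3). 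Reversing time only flips the signs of the $\lambda_i$, so $Q_{\mathcal{M}^{*}}(\bm{k})=c(\bm{k})I-i\sum_i\lambda_i\sin(\tilde{k}_i h_i)A_i=\overline{Q_{\mathcal{M}}(\bm{k})}$ for every $\bm{k}$, which is condition~1 and also the conjugation hypothesis needed in Theorem~\ref{thm:accuracy_system}; and since $\mathrm{Re}\,Q_{\mathcal{M}}=c(\bm{k})I$ is scalar it commutes with $\mathrm{Im}\,Q_{\mathcal{M}}$, giving condition~2. Thus all hypotheses hold.

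Next, for accuracy I would note that $\mathcal{M}$ is consistent and, by a short Taylor expansion of $Q_{\mathcal{M}}$ against $e^{\Delta t P(i\bm{k})}$ together with Theorem~\ref{thm:lax}~\cite{lax1961stability}, exactly first order accurate (the two symbols agree through degree one but not degree two in $\bm{k}h$); since $r=1$ is odd, Theorem~\ref{thm:accuracy_system} immediately yields that the BFECC scheme is second order accurate in every dimension. For stability, Theorem~\ref{thm:stability_system} reduces the claim to $|\rho(Q_{\mathcal{M}}(\bm{k}))|\le 2$. Writing $Q_{\mathcal{M}}=c(\bm{k})I+iY$ with $Y$ real symmetric, each eigenvalue of $Q_{\mathcal{M}}$ equals $c(\bm{k})+i\mu$ with $\mu\in\R$ and the largest $|\mu|$ is $\rho(Y)=\big(\sum_i\lambda_i^2\sin^2(\tilde{k}_i h_i)\big)^{1/2}$, so that
\begin{equation*}
|\rho(Q_{\mathcal{M}}(\bm{k}))|^2 = c(\bm{k})^2+\sum_{i=1}^{d}\lambda_i^2\sin^2(\tilde{k}_i h_i).
\end{equation*}
In $d=1$ this is $\cos^2(\tilde{k}h)+\lambda^2\sin^2(\tilde{k}h)\le\max(1,\lambda^2)$, which is $\le 4$ as soon as $\lambda\le 2$. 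For $d=2$ and $d=3$ I would use $\big(\sum_i\cos\theta_i\big)^2\le d\sum_i\cos^2\theta_i$ to decouple the frequencies, bound each $\tfrac1d\cos^2\theta_i+\lambda_i^2\sin^2\theta_i\le\max(\tfrac1d,\lambda_i^2)$, and then run the case distinction over which maxima are active, exactly as in the two--dimensional computation above; the surviving constraints collapse to the two inequalities stated.

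The main obstacle is this last maximization. One must bound $\sup_{\bm{k}}\big(c(\bm{k})^2+\sum_i\lambda_i^2\sin^2(\tilde{k}_i h_i)\big)$, and although the Cauchy--Schwarz decoupling reduces it to $\sum_i\max(\tfrac1d,\lambda_i^2)$ and hence to the finitely many subset bounds $\tfrac{d-|S|}{d}+\sum_{i\in S}\lambda_i^2$, one still has to identify which of these are binding and verify that the CFL numbers in the statement keep each of them $\le 4$. Once the spectral--radius estimate is in hand, checking the remaining commutativity, conjugation and diagonalizability hypotheses of Theorems~\ref{thm:stability_system} and~\ref{thm:accuracy_system} dimension by dimension is entirely routine.
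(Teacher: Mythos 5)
Your proposal follows the paper's proof essentially line for line: reduce to Theorems~\ref{thm:stability_system} and~\ref{thm:accuracy_system} by checking that $Q_{\mathcal{M}}=c(\bm{k})I+iY$ is complex symmetric with $Q_{\mathcal{M}^*}=\overline{Q_{\mathcal{M}}}$, use $r=1$ odd for the accuracy claim, compute $|\rho(Q_{\mathcal{M}})|^2=c(\bm{k})^2+\sum_i\lambda_i^2\sin^2(\tilde{k}_ih_i)$, and decouple the cosines by $\bigl(\sum_i\cos\theta_i\bigr)^2\le d\sum_i\cos^2\theta_i$ so that $|\rho(Q_{\mathcal{M}})|^2\le\sum_i\max\bigl(\tfrac1d,\lambda_i^2\bigr)$. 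This is exactly the paper's computation, and your treatment of the hypotheses and of the $d=1,2$ cases is correct and complete.

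The one step you defer --- ``the surviving constraints collapse to the two inequalities stated'' --- is where a genuine gap sits in $d=3$. Carrying out your own subset bookkeeping, the condition $\sum_i\max\bigl(\tfrac13,\lambda_i^2\bigr)\le4$ is equivalent to the family $\lambda_i^2\le\tfrac{10}{3}$, $\lambda_i^2+\lambda_j^2\le\tfrac{11}{3}$, $\sum_i\lambda_i^2\le4$, which is \emph{not} the pair of conditions in the theorem ($\max_i\lambda_i\le\sqrt3$ and $\sum_i\lambda_i^2\le4$): the pairwise bound $\lambda_i^2+\lambda_j^2\le\tfrac{11}{3}$ is neither one of the stated conditions nor implied by them. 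Indeed, with $\lambda_x^2=\lambda_y^2=1.95$, $\lambda_z^2=0.1$ (so $\max_i\lambda_i<\sqrt3$ and $\sum_i\lambda_i^2=4$) and frequencies with $\tilde{k}_xh_x=\tilde{k}_yh_y=\pi/2$, $\tilde{k}_zh_z=0$, one gets $|\rho(Q_{\mathcal{M}})|^2=\tfrac19+3.9>4$, so no argument can close the 3D case exactly as stated for anisotropic grids. The paper itself only says ``similarly'' for $d=3$, so this discrepancy is inherited from the source; but your proof as written cannot end with the claimed collapse --- you would either have to record the genuine subset conditions you derive, or restrict to the isotropic case $\Delta x=\Delta y=\Delta z$, where the stated bound $\lambda\le 2/\sqrt3$ does satisfy all three families and the argument closes.
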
 

\subsection{BFECC based on interpolation of the central difference and the Lax-Friedrichs schemes}
The Lax-Friedrichs schems is more diffusive than the central difference scheme as the underlying scheme for BFECC. However, when there are discontinuities in the coefficients of the equations, the latter scheme may generate some numerical artifacts in the vicinities of the discontinuities. An interpolation between the two schemes could combine the strengths of both schemes. Let $\theta \in [0, 1]$. A $\theta$-scheme $\mathcal{L}_{\theta}$ is formally $\mathcal{L}_{\theta} = (1 - \theta) \mathcal{L} + \theta \mathcal{M}$, where $\mathcal{L}$ is the central difference scheme and $\mathcal{M}$ is the Lax-Friedrichs scheme for Maxwell's equations. 

Using aforementioned notations, for one dimensional Maxwell's equations, the scheme is
\begin{align*}
\bm{U}^{n+1}_{i} = (1 - \theta) \bm{U}^{n}_{i} + \theta \frac{\bm{U}^{n}_{i-1} + \bm{U}^{n}_{i+1} }{2} +  \frac{\Delta t}{2 \Delta x} A \left(\bm{U}^{n}_{i+1} - \bm{U}^{n}_{i-1} \right),
\end{align*}
where 
\begin{align*}
{U}^{n}_{i} \approx \begin{pmatrix}
E(t_n, i \Delta x) \\
H(t_n, i \Delta x)
\end{pmatrix}\, \text{ and } A = \begin{pmatrix}
0 & 1 \\
1 & 0
\end{pmatrix}.
\end{align*}
Its Fourier symbol matrix is 
\begin{align*}
Q_{\theta} = \left( 1 - \theta + \theta \cos(\tilde{k} h) \right) I + i \lambda \sin(\tilde{k} h) A,
\end{align*}
which satisfies all the conditions in Theorem \ref{thm:stability_system} and \ref{thm:accuracy_system}, and BFECC based on the $\theta$-scheme is second order accurate. Note that
\begin{align*}
| \rho(Q_{\theta}) |^2 = \left[ (1 - \theta) + \theta \cos(\tilde{k} h) \right]^2 + \lambda^2 \sin^2 (\tilde{k} h ).
\end{align*}
Since $f(x) = x^2$ is convex, we have
\begin{align*}
\left[ (1 - \theta) + \theta \cos(\tilde{k} h) \right]^2 \leq (1 - \theta) + \theta \cos^2(\tilde{k} h).
\end{align*}
Therefore
\begin{align*}
| \rho(Q_{\theta}) |^2  \leq (1 - \theta) + \theta \cos^2(\tilde{k} h) + \lambda^2 \sin^2 (\tilde{k} h) = (1 - \theta) | \rho(Q_{\mathcal{L}}) |^2 + \theta | \rho(Q_{\mathcal{M}}) |^2,
\end{align*}
where $Q_{\mathcal{L}}$ and $Q_{\mathcal{M}}$ are the Fourier symbol matrices for the central difference and Lax-Friedrichs schemes, respectively. And the CFL number of the $\theta$-scheme is between $\sqrt{3}$ and $2$. 

Similarly, for the two dimensional Maxwell's equations (\ref{eqn:2d_maxwell_tmz}), the $\theta$-scheme is
\begin{align*}
\bm{U}^{n+1}_{i, j}  = & (1 - \theta) \bm{U}^{n}_{i, j} + \theta \frac{\bm{U}^{n}_{i-1, j} + \bm{U}^{n}_{i+1, j} + \bm{U}^{n}_{i, j-1} + \bm{U}^{n}_{i, j+1} }{4}  \\
& + \frac{\Delta t}{2 \Delta x} A_1 \left(\bm{U}^{n}_{i+1, j} - \bm{U}^{n}_{i-1, j} \right) + \frac{\Delta t}{2 \Delta y} A_2 \left(\bm{U}^{n}_{i, j + 1} - \bm{U}^{n}_{i, j - 1} \right),
\end{align*}
where $\bm{U}^{n}_{i, j}$, $A_1$ and $A_2$ are defined as in Section~\ref{subsec:cd_2d}. 
And its Fourier symbol matrix 
\begin{align*}
Q_{\theta} = q_{\theta} I + i \lambda_x \sin( \tilde{k}_x h_x ) A_1 + i \lambda_y \sin( \tilde{k}_y h_y ) A_2,
\end{align*}
where $q_{\theta} = \left[ 1 - \theta + \theta \frac{\cos( \tilde{k}_x h_x ) + \cos(\tilde{k}_y h_y )  }{2} \right]$. The spectral radius $\rho(Q_{\theta})$ satisfies
\begin{align*}
| \rho(Q_{\theta}) |^2  = q_{\theta}^2 + \lambda_x^2 \sin^2( \tilde{k}_x h_x ) + \lambda_y^2 \sin^2( \tilde{k}_y h_y ) \leq (1 - \theta) | \rho(Q_{\mathcal{L}}) |^2 + \theta | \rho(Q_{\mathcal{M}}) |^2.
\end{align*}
In the inequality, we again use the convexity of $f(x) = x^2$ and the special form of $q_{\theta}$. Therefore, the constant in the CFL condition similar to (\ref{eqn:cfl}) would be between $\sqrt{3}$ and $2$. The analysis for three dimensional Maxwell's equations is similar, and the result is summarized as follows. 
\begin{theorem}\label{thm:interpolated_scheme}
Let $\theta \in [0, 1]$, and $\mathcal{L}_{\theta} = (1 - \theta) \mathcal{L} + \theta \mathcal{M}$, where $\mathcal{L}$ is the central difference scheme and $\mathcal{M}$ is the Lax-Friedrichs scheme for Maxwell's equations. Then BFECC based on $\mathcal{L}_{\theta}$ is second order accurate. It is stable if

\begin{enumerate}
\item in one dimensional case, $\Delta t \leq c_{\theta} \Delta x$; or
\item in two dimensional case, $\Delta t \leq \frac{c_{\theta}}{\sqrt{(1/\Delta x)^2 + (1/\Delta y)^2}}$ and $\Delta t \leq \sqrt{ \frac{7}{2} } \min( \Delta x, \Delta y)$; or 
\item in three dimensional case, 
\begin{align*}
\Delta t \leq \frac{c_{\theta}}{\sqrt{(1/\Delta x)^2 + (1/\Delta y)^2 + (1/\Delta z)^2 }}\, \text{ and } \Delta t \leq \sqrt{3} \min(\Delta x, \Delta y, \Delta z),
\end{align*}
\end{enumerate}
where $c_{\theta} \in [\sqrt{3}, 2]$ depends only on $\theta$. 
\end{theorem}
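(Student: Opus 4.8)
The plan is to treat $\mathcal{L}_{\theta}$ as an abstract linear scheme and reduce everything to Theorems~\ref{thm:accuracy_system} and~\ref{thm:stability_system}, using only two structural facts: Fourier symbols depend linearly on the scheme, and the central difference and Lax--Friedrichs symbols have the \emph{same} imaginary part. First I would record that $Q_{\mathcal{L}_{\theta}} = (1-\theta) Q_{\mathcal{L}} + \theta Q_{\mathcal{M}}$, and, since applying a convex combination of schemes to the time-reversed Maxwell system produces the convex combination of the time-reversed schemes, $Q_{\mathcal{L}_{\theta}^{*}} = (1-\theta) Q_{\mathcal{L}^{*}} + \theta Q_{\mathcal{M}^{*}}$. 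Combining this with the identities $Q_{\mathcal{L}^{*}} = \overline{Q_{\mathcal{L}}}$ and $Q_{\mathcal{M}^{*}} = \overline{Q_{\mathcal{M}}}$ already established for the central difference and Lax--Friedrichs schemes gives $Q_{\mathcal{L}_{\theta}^{*}} = \overline{Q_{\mathcal{L}_{\theta}}}$. Since $\mathcal{L}$ and $\mathcal{M}$ are both (at least) first order, their symbols agree with $e^{\Delta t P(i\bm{k})}$ up to $O(|\bm{k} h|^{2})$, hence so does $Q_{\mathcal{L}_{\theta}}$; thus $\mathcal{L}_{\theta}$ is first order with $r=1$ odd, and Theorem~\ref{thm:accuracy_system} immediately yields that $\mathcal{L}_{\theta,BFECC}$ is second order accurate, in any space dimension.

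For stability I would exploit that $Q_{\mathcal{L}} = I + i Y$ and $Q_{\mathcal{M}} = q_{LF} I + i Y$ with the \emph{common} real symmetric imaginary part $Y = \mathrm{Im}(Q_{\mathcal{L}}) = \mathrm{Im}(Q_{\mathcal{M}})$ ($Y = \lambda \sin(\tilde{k}h) A$ in 1D, $Y = \lambda_x \sin(\tilde{k}_x h_x) A_1 + \lambda_y \sin(\tilde{k}_y h_y) A_2$ in 2D, analogously in 3D) and $q_{LF} = \mathrm{Re}(Q_{\mathcal{M}})$ a real scalar (e.g. $\cos(\tilde{k}h)$ in 1D). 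Therefore $Q_{\mathcal{L}_{\theta}} = q_{\theta} I + i Y$ with $q_{\theta} = (1-\theta) + \theta q_{LF}$ real, so $\mathrm{Re}(Q_{\mathcal{L}_{\theta}}) = q_{\theta} I$ and $\mathrm{Im}(Q_{\mathcal{L}_{\theta}}) = Y$ are real symmetric and commute: conditions 1--3 of Theorem~\ref{thm:stability_system} hold (equivalently, $Q_{\mathcal{L}_{\theta}}$ is complex symmetric, cf.\ Remark~3 after that theorem). Hence, by Theorem~\ref{thm:stability_system} together with Remark~1, the BFECC scheme is $l^2$ stable as soon as $\rho(Q_{\mathcal{L}_{\theta}}(\bm{k})) \le 2$ for every $\bm{k} \in \mathcal{F}_{\bm{N}}$.

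It then remains to estimate $\rho(Q_{\mathcal{L}_{\theta}})$. Since $Y$ is real symmetric, the eigenvalues of $Q_{\mathcal{L}_{\theta}}$ are $q_{\theta} + i\mu$ with $\mu$ an eigenvalue of $Y$, so $\rho(Q_{\mathcal{L}_{\theta}}(\bm{k}))^{2} = q_{\theta}(\bm{k})^{2} + \rho(Y(\bm{k}))^{2}$, and computing the characteristic polynomial of $Y$ gives $\rho(Y(\bm{k}))^{2} = \sum_{i} \lambda_i^{2} \sin^{2}(\tilde{k}_i h_i)$ in every dimension. Applying convexity of $t \mapsto t^{2}$ to $q_{\theta} = (1-\theta)\cdot 1 + \theta q_{LF}$ produces the pointwise bound $\rho(Q_{\mathcal{L}_{\theta}}(\bm{k}))^{2} \le (1-\theta)\rho(Q_{\mathcal{L}}(\bm{k}))^{2} + \theta \rho(Q_{\mathcal{M}}(\bm{k}))^{2}$, so whenever the CD and LF CFL bounds of Theorems~\ref{thm:CD_thm} and~\ref{thm:LF_thm} both hold, $\rho(Q_{\mathcal{L}_{\theta}}) \le 2$; this already proves stability with $c_{\theta} = \sqrt{3}$, and the $\sqrt{7/2}$ (2D) and $\sqrt{3}$ (3D) side conditions are then automatic. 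For the upper bound $c_{\theta} \le 2$, I would evaluate $q_{\theta}^{2} + \sum_i \lambda_i^{2}\sin^{2}(\tilde{k}_i h_i)$ at a wavenumber where $\cos(\tilde{k}_i h_i)$ is arbitrarily close to $0$ in each active direction (possible for a fine enough mesh); there it equals $(1-\theta)^{2} + \sum_i \lambda_i^{2}$, forcing $\sum_i \lambda_i^{2} \le 4 - (1-\theta)^{2}$, i.e.\ $c_{\theta} \le \sqrt{4-(1-\theta)^{2}} \le 2$. Because neither $q_{\theta}$ nor the $\bm{k}$-supremum of $\sum_i \lambda_i^{2}\sin^{2}(\tilde{k}_i h_i)$ sees the mesh except through $\sum_i \lambda_i^{2}$ (together with the $\theta$-independent per-direction side conditions inherited from the Lax--Friedrichs analysis), the optimal threshold is a function of $\theta$ alone, lying in $[\sqrt{3}, 2]$.

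The main obstacle is the last step: the convexity inequality alone only delivers $c_{\theta} = \sqrt{3}$, so recovering a genuinely interpolating value requires analyzing $\max_{\bm{k}}\bigl(q_{\theta}(\bm{k})^{2} + \sum_i \lambda_i^{2}\sin^{2}(\tilde{k}_i h_i)\bigr)$ directly, which is a single-variable quadratic in $\cos(\tilde{k}h)$ in 1D but a several-variable trigonometric optimization in 2D and 3D; the saving grace is that the statement only asserts $c_{\theta} \in [\sqrt{3},2]$, for which the crude value $c_{\theta} = \sqrt{3}$ (lower end, from convexity) and the one bad-wavenumber evaluation (upper end) suffice, and the side conditions $\Delta t \le \sqrt{7/2}\min(\Delta x, \Delta y)$ and $\Delta t \le \sqrt{3}\min(\Delta x,\Delta y,\Delta z)$ are exactly those of Theorem~\ref{thm:LF_thm} at $\theta = 1$, where they are already verified.
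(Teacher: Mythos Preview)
Your proposal is correct and follows essentially the same route as the paper: write $Q_{\mathcal{L}_\theta} = q_\theta I + iY$ with $q_\theta = (1-\theta)+\theta q_{LF}$ and the common real symmetric imaginary part $Y$, verify the hypotheses of Theorems~\ref{thm:stability_system} and~\ref{thm:accuracy_system}, and then use convexity of $t\mapsto t^2$ on $q_\theta$ to bound $\rho(Q_{\mathcal{L}_\theta})^2$ by $(1-\theta)\rho(Q_{\mathcal{L}})^2 + \theta\rho(Q_{\mathcal{M}})^2$. If anything you are slightly more careful than the paper in justifying the upper endpoint $c_\theta \le 2$ via evaluation at a near-$\cos=0$ wavenumber; the paper simply asserts that the CFL constant lies between $\sqrt{3}$ and $2$ after the convexity inequality.
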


\subsection{Least square local linear approximation for non-rectangular grids}\label{subsec:ls_scheme}

A special case of $\mathcal{L}_{\theta}$ is based on the linear least square fitting, which can also be used on irregular grids conveniently. In order to adapt to non-orthogonal grids, we consider some simple first order underlying schemes based on linear least squares. Least squares method significantly improves the robustness of polynomial approximation in multi dimensions. In WENO-type schemes for solving nonlinear conservation laws on unstructured meshes, least squares (high degree) polynomial fitting has been used, see for example \cite{barth1990higher, hu1999weighted}. 

To design an explicit scheme for the Maxwell's equations, we need approximations for spatial derivatives such as $\frac{\partial E_z }{\partial x}$ and $\frac{\partial H_x}{\partial y}$ at the time $t_n$ to update field variables $\bm{E}$ and $\bm{H}$. A natural approach is to locally fit a linear function for each component of a field variable using the function values at a grid point and its neighbors, and then use the spatial derivatives of the linear function as approximations. 

Consider for example the approximation of $H_x$ and its derivatives at a grid point $(x_i, y_j)$. Denote this point $(x^0, y^0)$. Suppose its neighboring grid points are $(x^1, y^1)$, $(x^2, y^2)$, ..., $(x^K, y^K)$, where $K \geq 2$, and denote $(H_x)^i = H_x (x^i, y^i)$ for $i = 0, 1, ..., K$. A linear function $\hat{H}_x(x, y) = \hat{a} + \hat{b} (x - x^{0}) + \hat{c} ( y - y^{0})$ can be determined to fit the numerical values of $H_x$ at $(x^{j}, y^{j})$, $j = 0, 1, ..., K$, by using least squares fitting. This is a local procedure, and has to be done at every point at which the scheme is evaluated. 

We denote the approximated spatial derivatives at $(x^0, y^0)$ by $\frac{\partial \hat{H}_x}{\partial x}$ and  $\frac{\partial \hat{H}_x}{\partial y}$, and the approximated function value at $(x^0, y^0)$ by $\hat{H}_x(x^0, y^0)$ or $\left( \hat{H}_x \right)_{i, j}$. 

Similarly let $\frac{\partial \hat{E}_z}{\partial x}$, $\frac{\partial \hat{E}_z}{\partial y}$, $\frac{\partial \hat{H}_y}{\partial x}$, $\frac{\partial \hat{H}_y}{\partial y}$ be the least square approximation of $E_z$ and $H_y$'s partial derivatives at $(x^{0}, y^{0})$. An explicit scheme similar to the central difference scheme is (for Maxwell's equations in two dimensions, (\ref{eqn:2d_maxwell_tmz})):
\begin{align}\label{scheme:ls_cd}
\begin{split}
\left( E_z \right)^{n+1}_{i, j} = & \left( E_z \right)^{n}_{i, j} + \Delta t \left(  \left( \frac{\partial \hat{H}_y}{\partial x} \right)_{i, j}^n - \left( \frac{\partial \hat{H}_x}{\partial y} \right)_{i, j}^n \right)  \\
\left( H_x \right)^{n+1}_{i, j} = & \left( H_x \right)^{n}_{i, j} - \Delta t \left( \frac{\partial \hat{E}_z}{\partial y} \right)_{i, j}^n  \\
\left( H_y \right)^{n+1}_{i, j} = & \left( H_y \right)^{n}_{i, j} + \Delta t \left( \frac{\partial \hat{E}_z}{\partial x} \right)_{i, j}^n,
\end{split}
\end{align}
where $\left(E_z \right)_{i, j}^{n}$ denotes the numerical solution $\left(E_z \right)_{i, j}^{n} \approx E_z(x_i, y_j, t_n)$, similarly for $\left(H_x \right)_{i, j}^{n}$ and $\left(H_y \right)_{i, j}^{n}$. The set of grid points near $(x_i, y_j)$ used for least squares fitting in this paper are $(x_i, y_j)$, $(x_{i \pm 1}, y_j)$ and $(x_i, y_{j \pm 1})$. When the grid is a uniform rectangular grid, then the above least square approximation for spatial derivatives is the central difference approximation if the same set of neighboring points are used, and (\ref{scheme:ls_cd}) is just the central difference scheme. We refer to (\ref{scheme:ls_cd}) as the least square central difference scheme.

One could use the least square approximated field values as well as the least square approximated derivatives in the scheme, i.e. 
\begin{align}\label{scheme:ls_theta}
\begin{split}
\left( E_z \right)^{n+1}_{i, j} = & \left( \hat{E}_z \right)^{n}_{i, j} + \Delta t \left(  \left( \frac{\partial \hat{H}_y}{\partial x} \right)_{i, j}^n - \left( \frac{\partial \hat{H}_x}{\partial y} \right)_{i, j}^n \right)  \\
\left( H_x \right)^{n+1}_{i, j} = & \left( \hat{H}_x \right)^{n}_{i, j} - \Delta t \left( \frac{\partial \hat{E}_z}{\partial y} \right)_{i, j}^n  \\
\left( H_y \right)^{n+1}_{i, j} = & \left( \hat{H}_y\right)^{n}_{i, j} + \Delta t \left( \frac{\partial \hat{E}_z}{\partial x} \right)_{i, j}^n .
\end{split}
\end{align}
Here the subscript $(i, j)$ and superscript $n$ indicate that the approximation is done in a neighborhood of grid point $(x_i, y_j)$ using field values at time level $t_n$. Note $\left( \hat{E}_z \right)^{n}_{i, j}$, $\left( \hat{H}_x \right)^{n}_{i, j}$ and $\left( \hat{H}_y \right)^{n}_{i, j}$ are weighted averages of field values at $(i, j)$ and its neighbors, therefore this scheme is similar to the $\theta$-scheme on uniform rectangular grids. When the grid is a uniform rectangular grid (possibly with $\Delta x \neq \Delta y$), scheme (\ref{scheme:ls_theta}) reduces to the $\theta$-scheme with $\theta = 0.8$. We refer to this scheme as the least square $\theta$-scheme.

Both schemes are first order accurate, because the least square gradient approximation are first order accurate, and the least square field value approximation is second order accurate. Function approximation by least squares fitting have been well studied (see e.g. \cite{burden2001numerical, trefethen2013approximation}). For completeness, we give a short discussion on the accuracy of the least squares fitting. Without loss of generality, we can assume $(x^0, y^0) = (0, 0)$.  In a neighborhood of $(0, 0)$ with radius $O(h)$, rewrite function $u(x, y)$ as 
\begin{align*}
u(x, y) = a + b x + c y + f(x, y) = l(x, y) + f(x, y),
\end{align*}
where $f(x, y) = O(x^2 + y^2)$.
Suppose the linear function to be determined by least squares is
\begin{align*}
\hat{u}(x, y) = \hat{a} + \hat{b} x + \hat{c} y.
\end{align*}
We would like to show $|| (\hat{a}, \hat{b}, \hat{c}) - (a, b, c) || = O(h)$. Denote $\theta = (a, b, c)^T$ and $\hat{\theta} = (\hat{a}, \hat{b}, \hat{c})^T$. Suppose $(x^0, y^0)$'s neighboring grid points are $(x^1, y^1), (x^2, y^2), ..., (x^K, y^K)$, satisfying $\sqrt{ \left(x^j\right)^2 + \left(y^j\right)^2} = O(h)$, for $j = 1, 2, ..., K$. The coordinates of these points are collected in matrix $A$,
\begin{align}\label{eqn:grid_point_matrix}
A = \begin{pmatrix}
1 & x^0 & y^0 \\
1 & x^1 & y^1 \\
... & ... & ... \\
1 & x^K & y^K
\end{pmatrix},
\end{align}
and function values at these grid points are collected in vector $U = L + F$, where $L = \left( l(x^0, y^0), ..., l(x^K, y^K) \right)^T$ and $F = \left( f(x^0, y^0), ..., f(x^K, y^K) \right)^T$.
Then we have
\begin{align*}
\hat{\theta} = (A^T A)^{-1}A^T U \\
\theta = (A^T A)^{-1}A^T L.
\end{align*}
Therefore 
\begin{align*}
A(\hat{\theta} - \theta) = A (A^T A)^{-1}A^T ( U - L) = A (A^T A)^{-1}A^T F \\
\Rightarrow || A(\hat{\theta} - \theta) || = || A (A^T A)^{-1}A^T F || \leq || F ||, 
\end{align*}
where $|| \cdot ||$ denotes the $l^2$ norm. In the above, we use the fact that $A (A^T A)^{-1}A^T$ is an orthogonal projection.

Suppose $A$ is a $(K+1) \times 3$ matrix of full rank, so its smallest singular value $\sigma_3(A) > 0$. Suppose $\sigma_3(A) \geq D h$ for some constant $D > 0$, then we have 
\begin{align*}
& D h || (\hat{\theta} - \theta) || \leq \sigma_3 (A) || (\hat{\theta} - \theta) || \leq ||A(\hat{\theta} - \theta)|| \leq || F || \leq C \sqrt{K+1} h^2 \\
\Rightarrow &  || (\hat{\theta} - \theta) || \leq \frac{C \sqrt{K+1}}{D} h.
\end{align*}

So the problem reduces to a geometric condition $\sigma_3(A) \geq D h$ for some $D > 0$ for the selected neighboring grid points. It can be easily verified that the rectangular mesh and the hexagonal mesh both satisfy this condition. For example, a rectangular grid of size $h$ has $\sigma_3(A) = 2 h$, and a uniform hexagonal grid with edge length $h$ has $\sigma_3(A) = \sqrt{3} h$ (using a grid point and its $6$ adjacent grid points in the least squares fitting). 

Next, to show the least square field value approximation is second order accurate, we notice that $(x^0, y^0) = (0, 0$ and the first component of $A(\hat{\theta} - \theta)$ is 
\begin{align*}
\hat{a} + \hat{b} x^0 + \hat{c} y^0 - (a + b x^0 + c y^0) = \hat{a} - a = \hat{u}^{0} - u(x^0, y^0),
\end{align*}
where $\hat{u}^{0}$ is the least square field value approximation. Therefore
\begin{align*}
| \hat{u}^{0} - u(x^0, y^0)| \leq ||A(\hat{\theta} - \theta)|| \leq C \sqrt{K+1} h^2.
\end{align*}
Therefore the least square field value approximation is second order accurate. 

The order of accuracy result is summarized in the following theorem.
\begin{theorem}
Suppose the grid points coordinate matrix $A$ defined in (\ref{eqn:grid_point_matrix}) satisfies $\sigma_3(A) \geq D h$ for some positive constant $D$, then the least square center difference scheme and the least square $\theta$-scheme are both first order accurate.
\end{theorem}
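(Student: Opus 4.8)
The plan is to reduce the claim about the two schemes to the accuracy estimate already derived in the excerpt for the least squares fitting, and then propagate that estimate through one time step of each scheme. The key observation is that both scheme (\ref{scheme:ls_cd}) and scheme (\ref{scheme:ls_theta}) replace exact spatial derivatives by least squares gradient approximations (and, in the $\theta$-scheme case, replace the pointwise field value by the least squares fitted value $\hat u^0$), and then perform a forward Euler step in time. So the local truncation error decomposes into a spatial part and a temporal part, and I would bound each separately. The hypothesis $\sigma_3(A) \geq D h$ is exactly what is needed so that the estimates $|| \hat\theta - \theta || = O(h)$ and $| \hat u^0 - u(x^0,y^0) | = O(h^2)$ from the preceding discussion hold uniformly in the grid; I would invoke these directly.

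First I would set up the local truncation error. Fix a smooth exact solution $(E_z, H_x, H_y)$ of (\ref{eqn:2d_maxwell_tmz}), insert it into the right-hand side of the scheme at time $t_n$, and compare with the exact value at $t_{n+1}$. For scheme (\ref{scheme:ls_cd}), the $E_z$-update gives
\begin{align*}
E_z(x_i,y_j,t_{n+1}) - E_z(x_i,y_j,t_n) - \Delta t\left( \frac{\partial H_y}{\partial x} - \frac{\partial H_x}{\partial y}\right)\Big|_{(x_i,y_j,t_n)} = O(\Delta t^2),
\end{align*}
by Taylor expansion in $t$ together with the PDE $\partial_t E_z = \partial_x H_y - \partial_y H_x$. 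Replacing the exact derivatives by the least squares approximations introduces an additional error that, by the bound $||\hat\theta - \theta|| = O(h)$ applied to $b$ and $c$ (the coefficients of $x$ and $y$, i.e. the gradient components), is $O(\Delta t\, h)$. Since $\Delta t/h$ is fixed, both contributions are $O(h^2)$, so the local truncation error per step is $O(h^2)$, i.e. the scheme is consistent of order one (truncation error $O(h^2)$ corresponds to first order accuracy once divided by $\Delta t$). The same argument applies verbatim to the $H_x$ and $H_y$ updates, whose only spatial ingredient is a single least squares gradient of $E_z$.

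For scheme (\ref{scheme:ls_theta}) the only change is that $\left(E_z\right)^n_{i,j}$ is replaced by $\left(\hat E_z\right)^n_{i,j}$, and similarly for $H_x, H_y$; by the second part of the least squares discussion, $| \hat u^0 - u(x^0,y^0)| = O(h^2)$, so this substitution perturbs the update by $O(h^2)$ and does not change the order. Hence both schemes have truncation error $O(h^2)$. The remaining step is to pass from consistency to convergence: since the schemes are linear with constant stencil structure, I would note that the BFECC stabilization (or, on a uniform grid, the reduction of (\ref{scheme:ls_theta}) to the $\theta$-scheme with $\theta=0.8$ covered by Theorem~\ref{thm:interpolated_scheme}) supplies $l^2$ stability, and then a standard Lax-equivalence-type argument gives first order convergence; alternatively one can simply state, as the paper does, that first order local accuracy is what is meant by "first order accurate" here. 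The main obstacle is ensuring the least squares error bounds are genuinely uniform over all grid points — this is precisely where the uniform lower bound $\sigma_3(A)\geq Dh$ on the smallest singular value enters, and it is the one hypothesis that must be checked for any particular mesh (the excerpt already verifies it for rectangular and hexagonal grids).
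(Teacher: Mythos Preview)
Your proposal is correct and follows essentially the same approach as the paper. The paper's proof is in fact just the discussion immediately preceding the theorem statement: it derives the bounds $\|\hat\theta-\theta\|=O(h)$ for the gradient and $|\hat u^{0}-u(x^{0},y^{0})|=O(h^{2})$ for the fitted value under the hypothesis $\sigma_3(A)\geq Dh$, and then simply asserts that the two schemes are first order because the gradient approximation is first order and the value approximation is second order. Your write-up makes the passage from these two lemmas to the local truncation error explicit via the Taylor expansion in $t$ and the $O(\Delta t\cdot h)$ bound on the derivative replacement, which the paper leaves implicit; otherwise the argument is the same. Your remark on stability and convergence goes beyond what the paper claims here (the paper is only asserting consistency of order one), so you can safely drop that part.
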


Similar to the central difference scheme, the least square central difference scheme is usually numerically unstable. We can apply the BFECC method to improve the stability and accuracy. The least square $\theta$-scheme is conditionally stable, and applying BFECC also improves its stability and accuracy. On a uniform rectangular grid, BFECC based on the least square central difference and least square $\theta$-scheme are second order accurate and stable with CFL number $\sqrt{3}$ and a CFL number between $\sqrt{3}$ and $2$, respectively. On non-uniform or non-orthogonal grids, our current analysis is not sufficient to prove the stability and order of accuracy. Numerical examples in Section \ref{sec:num_examples} show that BFECC based on the least square $\theta$-scheme is conditionally stable and second order accurate. We omit the examples for BFECC based on the least square central difference scheme, which is also second order in our experiments with smooth solutions (not reported here) but is likely to have numerical artifacts at places where the coefficients of the equations have jump discontinuities.

{\bf Remark}.
As will be discussed in Section~\ref{subsec:div_free}, on a uniform rectangular grid, the central difference scheme and the BFECC scheme based on it preserve the divergence free property of the magnetic field. On a non-rectangular grid, the least square schemes and the corresponding BFECC schemes don't have this property. The flexibility of least square gradient approximation allows an option to reduce the divergence error. We can add a penalty term $\lambda \left(\left( \frac{\partial \hat{H}_x}{\partial x} \right) + \left( \frac{\partial \hat{H}_y}{\partial y} \right)\right)^2$ to the minimization functional of the least squares method, where $\lambda \geq 0$ is a parameter. The Gauss's law for the electric field can similarly be incorporated into the least squares. We will study them in the future. 

\subsection{Point shifted algorithm for grid generation}
It is often necessary to model curved material interfaces in computational eletromagnetics. The simplest treatment with a staircased approximation for the curved boundary can lead to large errors \cite{cangellaris1991analysis, taflove2005computational}. Local subcell methods \cite{taflove2005computational} model curved interfaces/boundaries by modifying the update rule near them. In these cells,  the integral form of the Maxwell's equations are usually used to update the field, e.g., the contour path method \cite{jurgens1992finite}. 

Using BFECC based on the least square central difference scheme (\ref{scheme:ls_cd}) or BFECC based on the least square $\theta$-scheme (\ref{scheme:ls_theta}), we can locally deform the grid near a curved interface to conform with the interface, and avoid switching to the integral form of the Maxwell's equations in these deformed cells. In this section, we describe a simple point shifted algorithm \cite{mcbryan1980elliptic} for shifting nearby grid points to the interface. It is used for numerical examples of scattering in Section~\ref{sec:num_examples}.

Given a uniform rectangular grid in two dimensions, denote the grid points $G_{\rm{rec}} = \{ (x_i, y_j): x_i = i \Delta x, y_j = j \Delta y, i = 0, 1, ..., N_{x}, j = 0, 1, ..., N_{y} \}$. Let $C$ be a closed curve, e.g., the boundary of a scattering object. The point shifted algorithm shifts nearby grid points to the interface for distances less than half of the grid size so that the topological structure of the grid remains unchanged. And the new grid point set $G_C = \{ (\tilde{x}_i, \tilde{y}_j): i = 0, 1, ..., N_x, j = 0, 1, ..., N_{y} \}$ conforms with curve $C$. It does so by finding the intersections of the grid lines and $C$, and shifts the nearest grid points to the intersection points.

\begin{algorithm}\label{alg:point_shift}
    \SetKwInOut{Input}{Input}
    \SetKwInOut{Output}{Output}

    \Input{Rectangular grid $G_{\rm{rec}} = \{ (x_i, y_j): x_i = i \Delta x, y_j = j \Delta y, i = 0, 1, ..., N_{x}, j = 0, 1, ..., N_{y} \}$, and a curve $C$.}
    \Output{Deformed grid $G_C = \{ (\tilde{x}_i, \tilde{y}_j): i = 0, 1, ..., N_x, j = 0, 1, ..., N_{y} \}$.}
    1. Copy $G_{\rm{rec}}$ to $G_C$: set $\tilde{x}_i = x_i$, $\tilde{y}_j = x_j$ for $i = 0, 1, ..., N_x$, $j = 0, 1, ..., N_y$\;
    2. Find all intersection points $\{ (\hat{x}^k, \hat{y}^k): k = 0, 1, ..., K\}$ on grid lines cut by $C$\;
    3. \For{k = 0, 1, ..., K}{
    Find the nearest point $(x_{i^*}, y_{j^{*}})$ in $G_{\rm{rec}}$ to $(\hat{x}^k, \hat{y}^k)$, when there is a tie, break the tie arbitrarily. Set $(\tilde{x}_{i^*}, \tilde{y}_{j^{*}}) = (\hat{x}^k, \hat{y}^k)$. 
    }
    4. Return $G_C$. 
    \caption{Point shifted algorithm}
\end{algorithm}

{\bf Remark}. A optional smoothing step can be added after the point shift to make the grid deformation more smooth. Denote the uniform rectangular grid points $\bm{x}_{i, j}$ and the point shifted grid point $\tilde{\bm{x}}_{i, j}$, where $i = 0, 1, ..., N_x$ and $j = 0, 1, ..., N_y$. First compute the point shift deformation $\bm{d}_{i, j} = \tilde{\bm{x}}_{i, j} - \bm{x}_{i, j}$. Second, copy $\bm{d}_{i, j}$ to $\tilde{\bm{d}}_{i, j}$, and 
for every $(i, j)$ such that $\bm{d}_{i, j} = \bm{0}$ (i.e. unshifted points), set 
\begin{align*}
\tilde{\bm{d}}_{i, j} = \frac{\bm{d}_{i-1, j} + \bm{d}_{i+1, j} + \bm{d}_{i, j-1} + \bm{d}_{i, j+1}}{4}. 
\end{align*}
This has the effect of smoothing out the point shift deformation. Third, assign new locations to the shifted grid points
\begin{align*}
\tilde{\bm{x}}_{i, j} = \bm{x}_{i, j} + \tilde{\bm{d}}_{i, j}.
\end{align*}
for $i = 0, 1, ..., N_x$ and $j = 0, 1, ..., N_y$. Note the shifted grid points that lie on the curve $C$ are unaffected by this smoothing step, only their neighbors get shifted in the smoothing step. This step can be repeated multiple times to smooth out the deformation to points that are further away from the curve $C$. Smoothing helps reduce grid deformation near the interface, and can be helpful when complicated interfaces are involved.

Figure \ref{fig:point_shifted_grids} shows examples of non-rectangular grids after applying the point shifted algorithm. The subfigure (a) is a uniform rectangular grid shifted to conform a circle without smoothing, the subfigure (b) is the same grid shifted to conform a circle, with a smoothing step, and the subfigure (c) is a uniform rectangular grid shifted to conform a more complicated curve, without smoothing. Grid (a) and (c) are use in the scattering numerical examples in Section~\ref{sec:num_examples}. We didn't use the smoothing step since the material interfaces in our numerical examples are simple and solutions on grids without smoothing already has expected order of accuracy. Note that the topologies of these grids have not been changed by the algorithm, making the implementation almost as simple as on a uniform rectangular grid. 

\begin{figure}[H]
\includegraphics[width=0.7\linewidth]{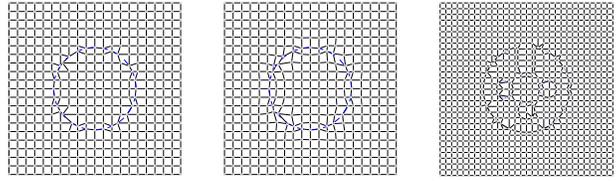}
\caption{Point shifted grids.}
\label{fig:point_shifted_grids}
\end{figure}

\subsection{Divergence of the magnetic field}\label{subsec:div_free}
The magnetic field satisfies the divergence free condition in the Maxwell's equations as long as it does so initially. We show that the central difference scheme conserves the numerical divergence of the magnetic field when the grid is a uniform rectangular grid. Therefore, BFECC based on the central difference scheme also conserves the numerical divergence of the magnetic field.

The numerical divergence of the magnetic field at the time $t_n$ is:
\begin{align*}
(\nabla \cdot \vec{H})^n_{i, j} = \frac{(H_x)^n_{i + 1, j} - (H_x)^n_{i - 1, j}}{2 \Delta x} + \frac{(H_y)^n_{i, j + 1} - (H_y)^n_{i, j - 1}}{2 \Delta y}.
\end{align*}

Using the central difference scheme to update $H_x$ and $H_y$, we get:
\begin{align*}
 \frac{(H_x)^{n+1}_{i + 1, j} - (H_x)^{n+1}_{i - 1, j}}{2 \Delta x} = & \frac{(H_x)^n_{i + 1, j} - (H_x)^n_{i - 1, j}}{2 \Delta x} - \\
 & \frac{(E_z)^n_{i+1, j+1} -(E_z)^n_{i+1, j-1} - (E_z)^n_{i-1, j+1} + (E_z)^n_{i-1, j-1} }{4 \Delta x \Delta y} \Delta t
\end{align*}
and 
\begin{align*}
\frac{(H_y)^{n+1}_{i, j + 1} - (H_y)^{n+1}_{i, j - 1}}{2 \Delta y} = & \frac{(H_y)^n_{i, j + 1} - (H_y)^n_{i, j - 1}}{2 \Delta y} + \\
  & \frac{(E_z)^n_{i+1, j+1} -(E_z)^n_{i+1, j-1} - (E_z)^n_{i-1, j+1} + (E_z)^n_{i-1, j-1} }{4 \Delta x \Delta y} \Delta t.
\end{align*}

Therefore
\begin{align*}
(\nabla \cdot \bm{H})^{n + 1}_{i, j} = (\nabla \cdot \bm{H})^n_{i, j}.
\end{align*}

Similar arguments show that for the Lax-Friedrichs scheme and the $\theta$-scheme, $\left( \nabla \cdot \bm{H} \right)_{i, j}^{n+1}$ is a convex combination of $\nabla \cdot \bm{H}$ at $(x_i, y_j)$ and its neighboring grid points (these two schemes only conserve $\sum_{i, j} \left( \nabla \cdot \bm{H} \right)_{i, j}$). In particular, if $\left( \nabla \cdot \bm{H} \right)_{i, j} = 0$ for all $i$ and $j$ initially, then this property holds for all subsequent $t_n$.

For irregular grids, the divergence free property is no longer guaranteed. But divergence penalty terms can be added to the minimization functional in the least square gradient approximation to reduce the divergence error, as discussed in Section~\ref{subsec:ls_scheme}.

\subsection{Perfectly Matched Layer}
Perfectly matched layers are commonly used as the absorbing boundary condition for problems in unbounded domains \cite{berenger1994perfectly}. We consider combining the unsplit convolutional perfectly matched layer \cite{komatitsch2007unsplit} with the BFECC method. Here we adapt the implementation in \cite{schneider2010understanding} and discuss it in the two dimensional case. The three dimensional case will be similar.

In the lossless domain, consider
\begin{align*}
& \frac{\partial E_z}{\partial t} = \frac{\partial H_y}{\partial x} - \frac{\partial H_x}{\partial y} \\
& \frac{\partial H_x}{\partial t} = - \frac{\partial E_z}{\partial y} \\
& \frac{\partial H_y}{\partial t} =  \frac{\partial E_z}{\partial x}.
\end{align*}

With the unsplit convolutional perfectly match layers, the equations in the perfectly matched layers are:
\begin{align*}
& \frac{\partial E_z}{\partial t} = \frac{\partial H_y}{\partial x} - \frac{\partial H_x}{\partial y}  + \zeta_x(t) \ast \frac{\partial H_y}{\partial x}  - \zeta_y(t) \ast \frac{\partial H_x}{\partial y}\\
& \frac{\partial H_x}{\partial t} = - \frac{\partial E_z}{\partial y} - \zeta_y (t) \ast \frac{\partial E_z}{\partial y} \\
& \frac{\partial H_y}{\partial t} =  \frac{\partial E_z}{\partial x} + \zeta_x(t) \ast \frac{\partial E_z}{\partial x},
\end{align*}
where 
\begin{equation*}
\zeta_w (t) = - \sigma_w e^{- \sigma_w t} u(t), \, w = x, y,
\end{equation*}
$u(t)$ is the unit step function, and $\sigma_x, \sigma_y$ are chosen conductivity parameters in the perfectly matched layers (PMLs). For PMLs adjacent to a boundary perpendicular to the $x$-axis, we choose $\sigma_x > 0$ and $\sigma_y = 0$; and for PMLs adjacent to a boundary perpendicular to the $y$-axis, we choose $\sigma_y > 0$ and $\sigma_x = 0$.

To implement BFECC in the perfectly matched layers, we first denote 
\begin{align*}
& b_x = e^{-\sigma_x \Delta t}, \, b_y =  e^{-\sigma_y \Delta t} \\
& c_x = b_x - 1, \, c_y = b_y - 1 \\
& \left(\Psi_{E_z x}\right)_{i, j}^n = \left( \zeta_x(t) \ast \frac{\partial H_y}{\partial x} \right)^{n}_{i, j} \\
& \left(\Psi_{E_z y}\right)_{i, j}^n = \left( \zeta_y(t) \ast \frac{\partial H_x}{\partial y} \right)^{n}_{i, j} \\
& \left(\Psi_{H_x y}\right)_{i, j}^n = \left( \zeta_y(t) \ast \frac{\partial E_z}{\partial y} \right)^{n}_{i, j} \\
& \left(\Psi_{H_y x}\right)_{i, j}^n = \left( \zeta_y(t) \ast \frac{\partial E_z}{\partial x} \right)^{n}_{i, j}.
\end{align*}

To update the field variables in the PMLs, we separate the terms dependent on the time $t_n$ from others in the convolution integrals (in order to stabilize them later by BFECC), approximate them using least squares, and obtain the least square central difference scheme:
\begin{align*}
\left( E_z \right)^{n+1}_{i, j} = & \left( E_z \right)^{n}_{i, j} + \Delta t \left( \left( \frac{\partial \hat{H}_y}{\partial x} \right)_{i, j}^n - \left( \frac{\partial \hat{H}_x}{\partial y} \right)_{i, j}^n \right) \\ + & \left( c_x \left( \frac{\partial \hat{H}_y}{\partial x} \right)_{i, j}^n + b_x \left(\Psi_{E_z x}\right)_{i, j}^{n-1} \right) \Delta t \\ -&  \left( c_y \left( \frac{\partial \hat{H}_x}{\partial y} \right)_{i, j}^n + b_y \left(\Psi_{E_z y}\right)_{i, j}^{n-1} \right) \Delta t
\end{align*}
\begin{align*}
\left( H_x \right)^{n+1}_{i, j} = & \left( H_x \right)^{n}_{i, j} - \Delta t \left( \frac{\partial \hat{E}_z}{\partial y} \right)_{i, j}^n  \\ - & \left( c_y \left( \frac{\partial \hat{E}_z}{\partial y} \right)_{i, j}^n + b_y \left(\Psi_{H_x y}\right)_{i, j}^{n-1} \right) \Delta t
\end{align*}
\begin{align*}
\left( H_y \right)^{n+1}_{i, j} = & \left( H_y \right)^{n}_{i, j} + \Delta t \left( \frac{\partial \hat{E}_z}{\partial x} \right)_{i, j}^n  \\ + & \left( c_x \left( \frac{\partial \hat{E}_z}{\partial x} \right)_{i, j}^n + b_x \left(\Psi_{H_y x}\right)_{i, j}^{n-1} \right) \Delta t.
\end{align*}
Here $\hat{H_x}$, $\hat{H}_y$ and $\hat{E}_z$ are corresponding linear approximation functions obtained by the least squares fitting.  

To apply BFECC to this scheme, we combine all the terms on the right hand side that involve spatial derivatives. For example, the equation for $E_z$ becomes
\begin{align*}
\left( E_z \right)^{n+1}_{i, j} = & \left( E_z \right)^{n}_{i, j} + \Delta t \left( (1 + c_x) \left( \frac{\partial \hat{H}_y}{\partial x} \right)_{i, j}^n - (1 + c_y) \left( \frac{\partial \hat{H}_x}{\partial y} \right)_{i, j}^n \right) \\ + & \left( b_x \left(\Psi_{E_z x}\right)_{i, j}^{n-1}  - b_y \left(\Psi_{E_z y}\right)_{i, j}^{n-1} \right) \Delta t.
\end{align*}
The term $\left( b_x \left(\Psi_{E_z x}\right)_{i, j}^{n-1}  - b_y \left(\Psi_{E_z y}\right)_{i, j}^{n-1} \right) \Delta t$ is treated as a source term. In the first two steps of the BFECC method, we ignore this source term. It is only there in the third step of BFECC. We can see that this requires very little modification to the scheme used in the computational domain.

Similarly, we can also use BFECC based on the least square $\theta$-scheme in the PMLs. 

\section{Numerical examples}\label{sec:num_examples}
\subsection{1D periodic solution}
We consider the following periodic initial condition for the 1D Maxwell's equations
\begin{align*}
E(0, x) = H(0, x) = \sin ( 2 \pi x). 
\end{align*}
The solution that satisfies the given initial condition is
\begin{align*}
E(t, x) = H(t, x) = \sin { 2 \pi (x + t) }.
\end{align*}

We solve the system with BFECC based on the central difference scheme from $t = 0$ to $t = 0.6$ with $\Delta t / \Delta x = 0.38$, $0.98$ and $1.7$, and compare the numerical solutions with the exact solution.

The order of accuracy result is summarized in Table-\ref{tb:1d_uniform_error_1}. The results confirm that BFECC based on the central difference scheme is second order accurate. Also note that the scheme is stable for $\Delta t = 1.7 \Delta x$, for which the classical Yee scheme becomes unstable. 

\begin{table}[H]
\centering
\caption{Order of accuracy for BFECC based on the central difference scheme at $T = 0.6$}\label{tb:1d_uniform_error_1}
\begin{tabular}{|c|c|c|c|c|c|c|} \hline
Grid & \multicolumn{2}{|c|}{$\Delta t/\Delta x = 0.38$} & \multicolumn{2}{|c|}{$\Delta t/\Delta x = 0.98$} & \multicolumn{2}{|c|}{$\Delta t/\Delta x = 1.7$} \\ \hline
 & Error & Order & Error & Order & Error & Order \\ \hline
$64$ & $1.11 \times 10^{-2}$ & -- & $2.50 \times 10^{-2}$ & -- & $5.58 \times 10^{-2}$ & -- \\ \hline
$128$ & $2.80 \times 10^{-3}$ & 2.00 & $6.41 \times 10^{-3}$ & 1.97 & $1.41 \times 10^{-2}$ & 1.99\\ \hline
$256$  & $7.93 \times 10^{-4}$  & 2.00 & $1.62 \times 10^{-3}$ & 2.00 & $3.58 \times 10^{-3}$ & 1.97\\ \hline
$512$  & $1.73 \times 10^{-4}$  & 2.00 & $4.00 \times 10^{-4}$ & 2.00 & $9.05 \times 10^{-4}$ & 1.99\\ \hline
$1024$  & $4.33 \times 10^{-5}$  & 2.00 & $1.00 \times 10^{-4}$ & 2.00 & $2.26 \times 10^{-4}$ & 2.00\\ \hline
$2048$  & $1.08 \times 10^{-5}$  & 2.00 & $2.51 \times 10^{-5}$ & 2.00 & $5.67 \times 10^{-5}$ & 2.00\\ 
\hline\end{tabular}
\end{table}

\subsection{2D periodic solution}
We consider the following periodic initial condition for the 2D Maxwell's equations in $\rm{TM}_{\rm{z}}$ mode.
\begin{align*}
& E_z(0, x, y) = \sin(2 \pi x) \\
& H_x(0, x, y) = 0 \\
& H_y(0, x, y) = -\sin(2 \pi x). 
\end{align*}
The exact solution is
\begin{align*}
& E_z(t, x, y) = \sin(2 \pi (x - t) ) \\
& H_x(0, x, y) = 0 \\
& H_y(0, x, y) = -\sin(2 \pi (x - t) ).
\end{align*}

We solve the system with BFECC based on the least square $\theta$-scheme from $t=0$ to $t=2.5$ with $\Delta t / \Delta x = 0.25$, and compare the solutions with the exact solutions. The problem is solved in four grids: (a) uniform rectangular grid; (b) non-rectangular grid obtained by a smooth perturbation from (a); (c) non-rectangular grid with a global circular grid deformation; and (d) non-rectangular grid with grid points shifted to a circular interface. The grids are shown in Figure-\ref{fig:periodic_4_grids} and the order of accuracy is shown in Table \ref{tb:2d_periodic_accuracy}. We see the numerical orders of accuracy are all above $2$, showing the effectiveness of the BFECC method on non-orthogonal grids. 

\begin{figure}[H]
\includegraphics[width=0.7\linewidth]{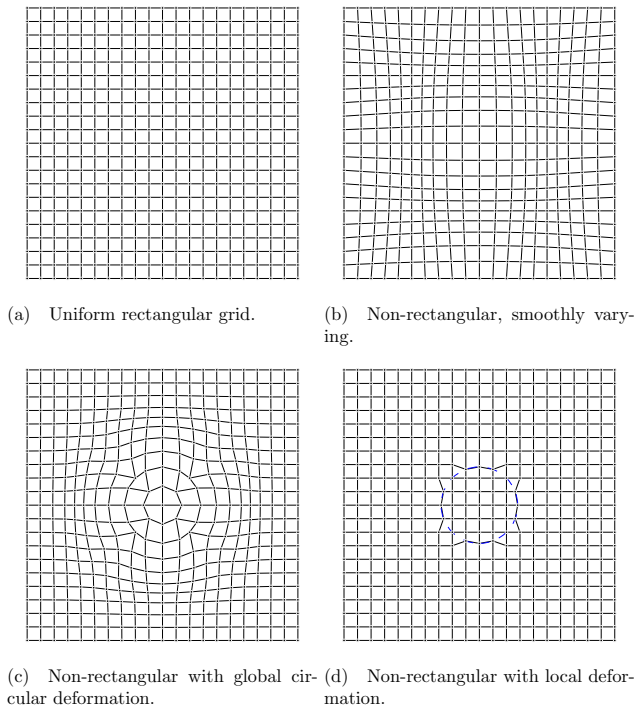}
\caption{Grids: (a) Uniform rectangular; (b) (c) and (d) Non-orthogonal grids.}
\label{fig:periodic_4_grids}
\end{figure}

\begin{table}[H]
\centering
\caption{Order of accuracy for BFECC based on the least square $\theta$-scheme at $T = 2.5$}\label{tb:2d_periodic_accuracy}
\resizebox{\columnwidth}{!}{%
\begin{tabular}{|c|c|c|c|c|c|c|c|c|} \hline
Grid & \multicolumn{2}{|c|}{(a)} & \multicolumn{2}{|c|}{(b)} & \multicolumn{2}{|c|}{(c)} & \multicolumn{2}{|c|}{(d)}  \\ \hline
 & Error & Order & Error & Order & Error & Order & Error & Order \\ \hline
$20 \times 20$ & $5.843 \times 10^{-2}$ & -- & $1.502 \times 10^{-1}$ & -- & $ 6.429 \times 10^{-2}$ & -- & $5.723 \times 10^{-2}$ & -- \\ \hline
$40 \times 40$ & $8.160 \times 10^{-3}$ & 2.84 & $2.469 \times 10^{-2}$ & 2.61 & $1.070 \times 10^{-2} $ & 2.59 & $7.013 \times 10^{-3}$ & 3.03 \\ \hline
$80 \times 80$  & $1.269 \times 10^{-3}$ & 2.69 & $3.426 \times 10^{-3} $ & 2.85 & $2.413 \times 10^{-3} $ & 2.15 & $8.485 \times 10^{-4}$ & 3.05\\ 
\hline\end{tabular}%
}
\end{table}

\subsection{Scattering by a dielectric cylinder}
In this example, we solve the 2D Maxwell's equations in $\rm{TM}_{\rm{z}}$ mode with BFECC based on the least square $\theta$-scheme for the scattering problem by a dielectric cylinder.
\begin{align*}
& \mu \frac{\partial H_x}{\partial t} = - \frac{\partial E_z}{\partial y} \\
& \mu \frac{\partial H_y}{\partial t} =  \frac{\partial E_z}{\partial x} \\
& \epsilon \frac{\partial E_z}{\partial t} = \frac{\partial H_y}{\partial x} - \frac{\partial H_x}{\partial y}.
\end{align*}

The incident wave is a $z$-polarized plane wave travelling in the $x$ direction, i.e. $ (E_z)_{inc} = \sin(\omega (x - t))$, $(H_x)_{inc} = 0$ and $(H_y)_{inc} = - \sin(\omega (x - t))$, where $\omega = 2 \pi / 0.6$ is the angular frequency. The computational domain is $[0, 1] \times [0, 1]$. A dielectric cylinder with $\epsilon = 2.25$ and $\mu = 1$ and radius $0.24$ is placed in the center of the computation domain. The surrounding medium has $\epsilon = 1$ and $\mu = 1$. Perfectly match layers are used as absorbing boundaries, and the total-field/scattered-field formulation \cite{taflove2005computational} is used to introduce plane waves into the computational domain.

Two grids are used in computation: (a) a uniform rectangular grid is used and the material interface is approximated by stair-casing; and (b) a point shifted grid in which intersection points of the uniform rectangular grid and the material interface are computed and the closest rectangular grid points are moved to the intersection points, and see Figure \ref{fig:point_shifted_grids} (a). We use a simple treatment for the material interface: if a grid point falls inside the dielectric cylinder, $\epsilon = 2.25$ and $\mu = 1$ are used during the update of $\bm{E}$ and $\bm{H}$, otherwise, $\epsilon = 1$ and $\mu = 1$ are used. Other interface treatments will be studied in the future.

BFECC based on the least square $\theta$-scheme is used instead of BFECC based on the least square central difference scheme is used. The larger numerical dissipation is helpful when there is material discontinuity. When BFECC based on the least square central difference scheme is used, there are small spurious oscillations presented in the numerical solution due to the material discontinuity. 

Since the CFL condition for BFECC based on the least square $\theta$-scheme only requires $\Delta t \leq \frac{\sqrt{3}}{\sqrt{(1/\Delta x)^2 + (1/\Delta y)^2}}$, here we take $\Delta t = \Delta x = \Delta y$. Smaller $\Delta t$ values have also been experimented, giving similar results as presented here. 

The numerical solution on the point-shifted grid at $t = 3.8$ is shown in Figure-\ref{fig:shift_mie_contour} and is compared with the analytic Mie solution \cite{bohren2008absorption} in Figure \ref{fig:shift_mie_slice}. BFECC based on the least square $\theta$-scheme scheme is able to generate smooth solutions without any spurious oscillation. $t = 3.8$ is chosen since the solution seems to reach the steady state at this time. The scheme has also been tested for several thousands time steps (up to $t = 12$) and the solution remains stable.

The grid refinement analysis for numerical solutions on uniform rectangular grids and point shifted grids is shown in Table \ref{tb:2d_mie_accuracy}. Here the numerical solution on a $320 \times 320$ grid is taken as the accurate solution, and all errors (in $l_2$) are computed with respect to this numerical solution. We can see that the BFECC scheme essentially achieves second order accuracy. 

\begin{table}[H]
\centering
\caption{Order of accuracy for BFECC based on the least square $\theta$-scheme at $T = 3.8$}\label{tb:2d_mie_accuracy}
\begin{tabular}{|c|c|c|c|c|} \hline
Grid & \multicolumn{2}{|c|}{uniform rectangular} & \multicolumn{2}{|c|}{non-rectangular} \\ \hline
& Error & Order & Error & Order  \\ \hline
$20 \times 20$ & $0.274$ & -- & 0.421 & --  \\ \hline
$40 \times 40$ & $0.0789$ & 1.80 & 0.130 & 1.69   \\ \hline
$80 \times 80$  & $0.0148$ & 2.41 & 0.0341 & 1.93  \\  \hline
$160 \times 160$  & $0.00370$ & 2.00 & 0.00683 & 2.33  \\ 
\hline\end{tabular}
\end{table}

\begin{figure}[H]
\includegraphics[width=0.9\linewidth]{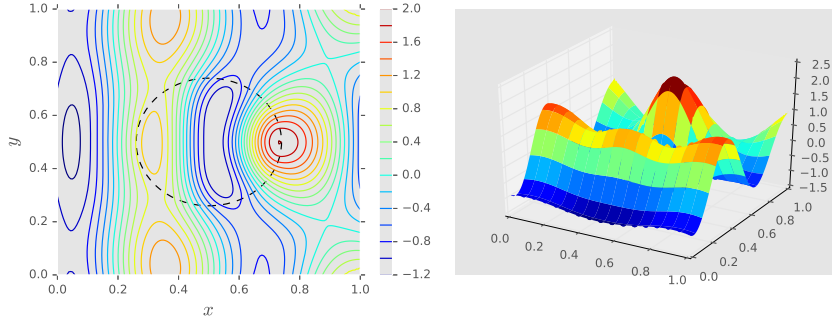}
\caption{BFECC based on the least square $\theta$-scheme solution at $t = 3.8$. Left: contour plot of $E_z$; Right: surface plot of $E_z$.}
\label{fig:shift_mie_contour}
\end{figure} 

\begin{figure}[H]
\includegraphics[width=0.7\linewidth]{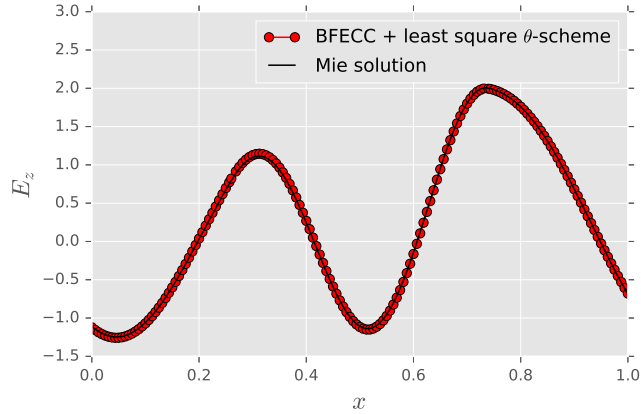}
\caption{Slice of $E_z$ with $y = 0.5$ at $t = 3.8$, compared with the analytic Mie solution.}
\label{fig:shift_mie_slice}
\end{figure} 

\subsection{Scattering by a dielectric object of complicated shape}
In this example, BFECC bases on the least square $\theta$-scheme is used to solve a scattering problem by a dielectric object of more complicated shape. The material setup is the same as in the previous example. Notice that the object has sharp corners and cavities inside, as shown in Figure \ref{fig:complex_shape_contour}. The two grids used for computations are (a) a uniform rectangular grid with staircasing approximation for material interface and (b) a point shifted grid, see Figure \ref{fig:point_shifted_grids} (b). 

The contour plot of $E_z$ at $t = 3.6$ is shown in Figure \ref{fig:complex_shape_contour}. Taking the $320 \times 320$ numerical solution as the reference, the numerical errors (in $l_2$) are shown in Table~\ref{tb:2d_complex_accuracy}. Again we see that the BFECC scheme is stable and has second order accuracy.

\begin{figure}[H]
\includegraphics[width=0.9\linewidth]{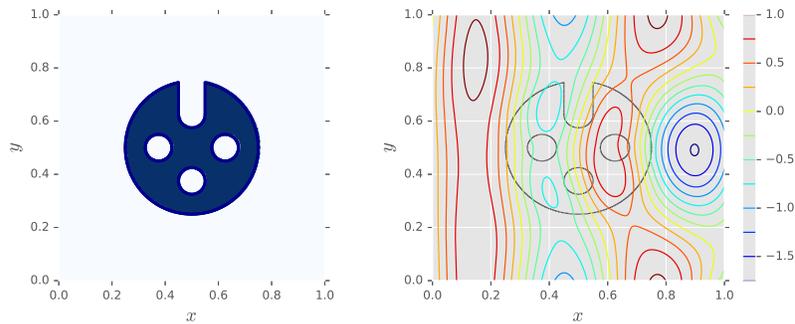}
\caption{Scattering by a complicated object. Left: shape of the object; Right: contour plot of $E_z$ at $t = 3.6$. }
\label{fig:complex_shape_contour}
\end{figure} 

\begin{table}[H]
\centering
\caption{Grid refinement analysis for BFECC based on the least square $\theta$-scheme at $T = 3.6$}\label{tb:2d_complex_accuracy}
\begin{tabular}{|c|c|c|c|c|} \hline
Grid & \multicolumn{2}{|c|}{uniform rectangular} & \multicolumn{2}{|c|}{non-rectangular} \\ \hline
 & Error & Order & Error & Order  \\ \hline
$20 \times 20$ & $4.200 \times 10^{-1}$ & -- & $4.384 \times 10^{-1}$ & --  \\ \hline
$40 \times 40$ & $1.159 \times 10^{-1}$ & 1.86 & $1.160 \times 10^{-1}$ & 1.92   \\ \hline
$80 \times 80$  & $3.618 \times 10^{-2}$ & 1.68 & $3.700 \times 10^{-2}$ & 1.65  \\  \hline
$160 \times 160$  & $8.116 \times 10^{-3}$ & 2.16 & $8.830 \times 10^{-3}$ & 2.07  \\ 
\hline\end{tabular}
\end{table}

\section{Conclusion}\label{sec:conclusion}
We study the Back and Forth Error Compensation and Correction (BFECC) Method for linear hyperbolic PDE systems and establish the stability and accuracy properties of BFECC for homogeneous linear hyperbolic systems with constant coefficients. The method is then applied to the Maxwell's equations. On uniform orthogonal grids, BFECC based on the central difference and BFECC based on the Lax-Friedrichs schemes are proved to be second order accurate and have larger CFL number than the classical Yee scheme. On non-orthogonal or unstructured grids, the BFECC method is applied to a first order scheme based on least square gradient approximation. Numerical examples demonstrate the effectiveness of the BFECC schemes for Maxwell's equations. In particular BFECC based on the least square central difference scheme or the least square $\theta$-scheme is easy to implement on non-orthogonal grids, has larger CFL numbers and second order accuracy in the numerical examples we have tested. We plan to test the BFECC schemes on unstructured grids with adaptive refinement for more complicated application problems in the future. 

\section{Acknowledgement}
The authors thank Jinjie Liu for helpful discussions on the Yee scheme.

\bibliographystyle{amsplain}

\bibliography{system_maxwell_bib}

\begin{appendix}
\section{Stability and accuracy of BFECC schemes based on central difference}\label{app:bfecc_CD_details}

\subsection{One dimensional case}

For Maxwell's equations in one dimensional free space with periodic boundary condition, central difference scheme $\mathcal{L}$'s Fourier symbol matrix is
\begin{align*}
Q_{\mathcal{L}} = 
\begin{pmatrix}
1 & i \lambda \sin( 2 \pi k h ) \\
i \lambda \sin( 2 \pi k h ) & 1
\end{pmatrix},
\end{align*}
and $\mathcal{L}^{*}$'s Fourier symbol matrix is $Q_{\mathcal{L}^{*}} = \overline{Q_{\mathcal{L}}}$. 

For the BFECC scheme based on central difference, its Fourier symbol matrix is 
\begin{align*}
Q_{B}  = Q_{\mathcal{L}} \left( I + \frac{1}{2} ( I - Q_{\mathcal{L}^{*}} Q_{\mathcal{L}} ) \right) = \left( 1 - \frac{1}{2} \lambda^2 \sin^2 ( 2 \pi k h ) \right) \begin{pmatrix}
1 & i \lambda \sin( 2 \pi k h ) \\
i \lambda \sin( 2 \pi k h ) & 1
\end{pmatrix}
\end{align*}

\emph{Stability}
We calculate eigenvalues for $Q_{\mathcal{L}}$ and $Q_{B}$
\begin{align*}
& \lambda(Q_{\mathcal{L}})_{\pm} = 1 \pm i \lambda \sin( 2 \pi k h) \\
& \lambda(Q_{B})_{\pm} = \left( 1 - \frac{1}{2} \lambda^2 \sin^2 ( 2 \pi k h ) \right) ( 1 \pm i \lambda \sin( 2 \pi k h))
\end{align*}
We study the spectral radius of $Q_{B}$:
\[
| \lambda(Q_{B})_{\pm} |^2 = \left( 1 - \frac{1}{2} \lambda^2 \sin^2 ( 2 \pi k h ) \right)^2 (1 + \lambda^2 \sin^2 ( 2 \pi k h ) )
\]
Let $\zeta = \sin^2( 2 \pi k h) \in [0, 1]$, and define
\[
f(\zeta) = | \lambda(Q_{B})_{\pm} |^2 = \left( 1 - \frac{1}{2} \lambda^2 \zeta \right)^2 ( 1 + \lambda^2 \zeta )
\]

When $\lambda^2 \leq 2$, $f(\zeta)$ is monotonically decreasing in $[0, 1]$, and it obtains its maximum at $0$, $f(0) = 1$ and for all $\zeta \in (0, 1]$, $f(\zeta) < 1$. For the case $f(0) = 1$, we can explicitly check that mode is stable. Therefore for $\lambda^2 \leq 2$, the scheme is stable. 

When $\lambda^2 > 2$, $\max_{\zeta \in [0, 1]} f(\zeta) = \max( f(0), f(1)) = \max\left( 1, \left(1 - \frac{1}{2} \lambda^2 \right)^2 ( 1 + \lambda^2 ) \right)$, we already checked $k = 0$ is always a stable mode. Setting $\left(1 - \frac{1}{2} \lambda^2 \right)^2 ( 1 + \lambda^2 ) < 1$, we get $\lambda^2 < 3$. 

Therefore $\Delta t / \Delta x = \lambda < \sqrt{3}$ ensures $l^2$ stability for the BFECC scheme. 

\emph{Accuracy}
Write 
\begin{align*}
E(t, x) = \sum_{k \in F_N} C_k(t) e^{2 \pi i k x} \\
H(t, x) = \sum_{k \in F_N} D_k(t) e^{2 \pi i k x} 
\end{align*}
and plug in the Maxwell's equations, we get:
\begin{align*}
\frac{d}{d t}
\begin{pmatrix}
C_k \\
D_k
\end{pmatrix} = \begin{pmatrix}
0 & 2 \pi i k \\
2 \pi i k & 0 
\end{pmatrix}
\begin{pmatrix}
C_k \\
D_k
\end{pmatrix} = G 
\begin{pmatrix}
C_k \\
D_k
\end{pmatrix}
\end{align*}
where matrix $G$ is defined by the last equality. Calculate the matrix exponential, we get: 
\begin{align*}
\begin{pmatrix}
C_k(t_n + \Delta t) \\
D_k(t_n + \Delta t) 
\end{pmatrix}
= e^{\Delta t G} 
\begin{pmatrix}
C_k(t_n) \\
D_k(t_n) 
\end{pmatrix}
= \begin{pmatrix}
\cos ( 2 \pi k \Delta t) &i \sin( 2 \pi k \Delta t) \\
i \sin ( 2 \pi k \Delta t) &\cos( 2 \pi k \Delta t)
\end{pmatrix}
\begin{pmatrix}
C_k(t_n) \\
D_k(t_n) 
\end{pmatrix}
\end{align*}

While with BFECC based on the central difference scheme, we have: 
\begin{align*}
\scalemath{0.8}{
\begin{pmatrix}
C_k(t_n + \Delta t) \\
D_k(t_n + \Delta t) 
\end{pmatrix}
= Q_{B} 
\begin{pmatrix}
C_k(t_n) \\
D_k(t_n) 
\end{pmatrix}
= 
\left( 1 - \frac{1}{2} \lambda^2 \sin^2 ( 2 \pi k h ) \right) \begin{pmatrix}
1 & i \lambda \sin( 2 \pi k h ) \\
i \lambda \sin( 2 \pi k h ) & 1
\end{pmatrix}
\begin{pmatrix}
C_k(t_n) \\
D_k(t_n) 
\end{pmatrix}}%
\end{align*}
Note $\lambda h = \Delta t$, we see: 
\begin{align*}
Q_{B} = e^{\Delta t G} + O( |k h|^{3} ), \, \text{as } h \rightarrow 0
\end{align*}

By the Theorem-\ref{thm:lax}, we see BFECC based on the central difference scheme is a second order accurate scheme.

Numerical dispersion relation can be obtained by noticing
\begin{align*}
e^{i \omega \Delta t} = \lambda\left( Q_{B} \right)
\end{align*}
where $\omega = 2 \pi \nu$ is the angular frequency and $\nu$ is the frequency. 
Taking the imaginary part, we get
\begin{align*}
\sin( \omega \Delta t) = \lambda \left( 1 - \frac{1}{2} \lambda^2 \sin^2 ( \tilde{k} h ) \right)  \sin ( \tilde{k}  h )
\end{align*}
where $\tilde{k} = 2 \pi k$. Therefore the numerical phase speed is
\begin{align*}
\frac{\omega}{\tilde{k}} = \frac{1}{\lambda \tilde{k} h } \arcsin\left[ \lambda \left( 1 - \frac{1}{2} \lambda^2 \sin^2 ( \tilde{k} h ) \right)  \sin ( \tilde{k}  h ) \right]
\end{align*}
Expand the right hand side upto second order, we get
\begin{align*}
\frac{\omega}{\tilde{k}} = \frac{1}{3} \lambda^2 \tilde{k}^2 h^2 - \frac{1}{6} \tilde{k}^2 h^2 + O(\tilde{k}^3 h^3)
\end{align*}

\subsection{Two dimensional case}
For Maxwell's equations in two dimensional free space with periodic boundary condition, central difference scheme $\mathcal{L}$'s Fourier symbol matrix is
\begin{align*}
Q_{\mathcal{L}} = \begin{pmatrix}
1 & 0 & - i \lambda_y \sin( 2 \pi l \Delta y) \\
0 & 1 &  i \lambda_x \sin( 2 \pi k \Delta x) \\
- i \lambda_y \sin( 2 \pi l \Delta y) & i \lambda_x \sin( 2 \pi k \Delta x) & 1
\end{pmatrix} 
\end{align*}
As discussed in Section \ref{subsec:2d}, $Q_{\mathcal{L}^{*}} = \overline{Q_{\mathcal{L}}}$. For convenience of notation, we denote $s^x_k = \sin(2 \pi k \Delta x)$ and $s^y_l = \sin(2 \pi l \Delta y)$. 

BFECC based on the central Difference scheme has Fourier symbol matrix
\begin{align*}
Q_B = Q_{\mathcal{L}} \left( I + \frac{1}{2} ( I - Q_{\mathcal{L}^{*}} Q_{\mathcal{L}}) \right)
\end{align*}
By direct computation, we get
\begin{align*}
I + \frac{1}{2} ( I - Q_{\mathcal{L}^{*}} Q_{\mathcal{L}}) = 
\begin{pmatrix}
1 - \frac{1}{2} \lambda_y^2 (s^y_l)^2  & \frac{1}{2} \lambda_x \lambda_y s^x_k s^y_l & 0 \\
\frac{1}{2} \lambda_x \lambda_y s^x_k s^y_l & 1 - \frac{1}{2} \lambda_x^2 (s^x_k)^2 & 0 \\
0 & 0 & 1 - \frac{1}{2} \lambda_x^2 (s^x_k)^2 - \frac{1}{2} \lambda_y^2 (s^y_l)^2
\end{pmatrix}
\end{align*}
and 
\begin{align}
Q_B = \scalemath{0.8}{
\begin{pmatrix}
1 - \frac{1}{2} \lambda_y^2 (s^y_l)^2  & \frac{1}{2} \lambda_x \lambda_y s^x_k s^y_l & - i \lambda_y s^y_l \left( 1 - \frac{1}{2} \lambda_x^2 (s^x_k)^2 - \frac{1}{2} \lambda_y^2 (s^y_l)^2 \right)   \\
\frac{1}{2} \lambda_x \lambda_y s^x_k s^y_l & 1 - \frac{1}{2} \lambda_x^2 (s^x_k)^2 &  i \lambda_x s^x_k \left( 1 - \frac{1}{2} \lambda_x^2 (s^x_k)^2 - \frac{1}{2} \lambda_y^2 (s^y_l)^2 \right)  \\
- i \lambda_y s^y_l \left( 1 - \frac{1}{2} \lambda_x^2 (s^x_k)^2 - \frac{1}{2} \lambda_y^2 (s^y_l)^2 \right) & i \lambda_x s^x_k \left( 1 - \frac{1}{2} \lambda_x^2 (s^x_k)^2 - \frac{1}{2} \lambda_y^2 (s^y_l)^2 \right) & 1 - \frac{1}{2} \lambda_x^2 (s^x_k)^2 - \frac{1}{2} \lambda_y^2 (s^y_l)^2
\end{pmatrix}
}
\end{align}

\emph{Stability}
Eigenvalues of $Q_{\mathcal{L}}$ are 
\begin{align*}
\lambda_1 = 1, \, \lambda_{2, 3} = 1 \pm i \sqrt{\lambda_x^2 (s^x_k)^2 + \lambda_y^2 (s^y_l)^2}
\end{align*}

The matrix $A$ can be decomposed as
\[
Q_{\mathcal{L}} = V \Lambda V^{-1}
\] 
where 
\begin{align*}
\Lambda = \text{diag} \{\lambda_1, \lambda_2, \lambda_3\}
\end{align*}
and 
\begin{align*}
V = 
\begin{pmatrix}
\lambda_x s^x_k & - \lambda_y s^y_l &\lambda_y s^y_l\\
\lambda_y s^y_l & \lambda_x s^x_k & - \lambda_x s^x_k \\
0 & \sqrt{\lambda_x^2 (s^x_k)^2 + \lambda_y^2 (s^y_l)^2 } & \sqrt{\lambda_x^2 (s^x_k)^2+ \lambda_y^2 (s^y_l)^2}
\end{pmatrix}
\end{align*}

It is then easy to see the scheme is $l^2$ stable if and only if $\max_{s^x_k, s^y_l} |\lambda_{2, 3}| \leq 1$, which is not true since $|\lambda_{2, 3}|^2 = 1 + \lambda_x^2 (s^x_k)^2 + \lambda_y (s^y_l)^2 > 1$ for $s^x_k \neq 0$ or $s^y_l \neq 0$. Therefore the central difference scheme is unconditionally unstable. 

We can verify that columns of $V$ are also eigenvectors of $\overline{Q_{\mathcal{L}}} Q_{\mathcal{L}}$ and hence eigenvectors of $Q_B$. This allows us to compute the eigenvalues of $Q_B$: 
\[
\lambda_1(Q_B) = 1, \lambda_{2, 3}(Q_B) = \left( 1 - \frac{1}{2} ( \lambda_x^2 (s^x_k)^2 + \lambda_y^2 (s^y_l)^2) \right) \left( 1 \pm i \sqrt{\lambda_x^2 (s^x_k)^2 + \lambda_y^2 (s^y_l)^2}\right)
\]
Therefore, BFECC based on the central difference scheme is stable if and only if
\begin{align*}
\max_{s^x_k, s^y_l} \left( 1 - \frac{1}{2} ( \lambda_x^2 (s^x_k)^2 + \lambda_y^2 (s^y_l)^2) \right)^2 \left( 1 + \lambda_x^2 (s^x_k)^2 + \lambda_y^2 (s^y_l)^2 \right) \leq 1
\end{align*}

Let $\zeta = (s^x_k)^2 \in [0, 1], \theta = (s^y_l)^2 \in [0, 1]$, define 
\[
f(\zeta, \theta) = \left( 1 - \frac{1}{2} ( \lambda_x^2 \zeta + \lambda_y^2 \theta) \right)^2 \left( 1 + \lambda_x^2 \zeta + \lambda_y^2 \theta \right)
\]

We have 
\begin{align*}
\frac{\partial f}{\partial \zeta} < 0 \\
\frac{\partial f}{\partial \theta} < 0
\end{align*}
when $1 - \frac{1}{2} ( \lambda_x^2 \zeta + \lambda_y^2 \theta) > 0$ and above this line, both partial derivatives are positive. 

Using this property, we see 
\begin{align*}
& \max_{0 \leq \zeta, \theta \leq 1} f(\zeta, \theta) = f(0, 0) = 1, \, \text{ if } \lambda_x^2 + \lambda_y^2 < 2 \\
& \max_{0 \leq \zeta, \theta \leq 1} f(\zeta, \theta) = \max (f(0, 0), f(1, 1)) , \, \text{ if } \lambda_x^2 + \lambda_y^2 > 2 
\end{align*}
For the case, $\lambda_x^2 + \lambda_y^2 > 2$ the $l^2$ stability condition becomes
\[
f(1, 1) = \left( 1 - \frac{1}{2} ( \lambda_x^2  + \lambda_y^2) \right)^2 \left( 1 + \lambda_x^2  + \lambda_y^2 \right) \leq 1 \Leftrightarrow \lambda_x^2  + \lambda_y^2 \leq 3
\]
Therefore, BFECC based on the central difference scheme is stable is stable if and only if 
\[
\lambda_x^2  + \lambda_y^2 \leq 3 \Leftrightarrow \Delta t \leq \frac{\sqrt{3}}{\sqrt{(1/\Delta x)^2 + (1/\Delta y)^2}}
\]

\emph{Accuracy}
Write
\begin{align*}
& H_x = \sum_{k, l \in \mathcal{F}_N} C_{k, l}(t) e^{2 \pi i ( k x + l y)} \\
& H_y = \sum_{k, l \in \mathcal{F}_N} D_{k, l}(t) e^{2 \pi i ( k x + l y)} \\
& E_z = \sum_{k, l \in \mathcal{F}_N} E_{k, l}(t) e^{2 \pi i ( k x + l y)}
\end{align*}
Plug into the Maxwell's equations and get
\begin{align*}
\frac{\partial }{\partial t} \begin{pmatrix}
C_{k, l} \\
D_{k, l} \\
E_{k, l}
\end{pmatrix} = 
\begin{pmatrix}
0 & 0 & - 2 \pi i l \\
0 & 0 & 2 \pi i k \\
- 2 \pi il & 2 \pi i k & 0
\end{pmatrix}
\begin{pmatrix}
C_{k, l} \\
D_{k, l} \\
E_{k, l}
\end{pmatrix} = G 
\begin{pmatrix}
C_{k, l} \\
D_{k, l} \\
E_{k, l}
\end{pmatrix}
\end{align*} 

Calculate the matrix exponential to get
\begin{align*}
\begin{pmatrix}
C_{k, l}( t + \Delta t) \\
D_{k, l}( t + \Delta t)\\
E_{k, l}( t + \Delta t)
\end{pmatrix} = e^{\Delta t G}
\begin{pmatrix}
C_{k, l}( t ) \\
D_{k, l}( t )\\
E_{k, l}( t )
\end{pmatrix}
\end{align*}
where
\begin{align*}
e^{\Delta t G} = 
\begin{pmatrix}
\frac{k^2 + l^2 \cos( 2 \pi \sqrt{k^2 + l^2} \Delta t)}{k^2 + l^2} & \frac{kl ( 1- \cos( 2 \pi \sqrt{k^2 + l^2} \Delta t))}{k^2 + l^2} & -i \frac{l \sin( 2 \pi \sqrt{k^2 + l^2} \Delta t) }{\sqrt{k^2 + l^2}} \\
\frac{kl ( 1- \cos( 2 \pi \sqrt{k^2 + l^2} \Delta t))}{k^2 + l^2} & \frac{l^2 + k^2 \cos( 2 \pi \sqrt{k^2 + l^2} \Delta t)}{k^2 + l^2} & i \frac{k \sin( 2 \pi \sqrt{k^2 + l^2} \Delta t) }{\sqrt{k^2 + l^2}} \\
-i \frac{l \sin( 2 \pi \sqrt{k^2 + l^2} \Delta t) }{\sqrt{k^2 + l^2}} & i \frac{k \sin( 2 \pi \sqrt{k^2 + l^2} \Delta t) }{\sqrt{k^2 + l^2}} & \cos( 2 \pi \sqrt{k^2 + l^2} \Delta t)
\end{pmatrix}
\end{align*}

Note it is symmetric. Expand entries of $e^{\Delta t G}$ upto second order, the entries are listed as (in the order of $(1, 1), (1, 2), (1, 3), (2, 2), (2, 3), (3, 3)$):
\begin{align*}
& \frac{k^2 + l^2 \cos( 2 \pi \sqrt{k^2 + l^2} \Delta t)}{k^2 + l^2} = 1 - \frac{1}{2} (2 \pi l^2) (\Delta t)^2 + O(\Delta t^3) \\
& \frac{kl ( 1- \cos( 2 \pi \sqrt{k^2 + l^2} \Delta t))}{k^2 + l^2} = \frac{1}{2} ( 2 \pi)^2 k l (\Delta t)^2 + O(\Delta t^3) \\
& -i \frac{l \sin( 2 \pi \sqrt{k^2 + l^2} \Delta t) }{\sqrt{k^2 + l^2}} = -i 2 \pi l \Delta t + O(\Delta t^3) \\
& \frac{l^2 + k^2 \cos( 2 \pi \sqrt{k^2 + l^2} \Delta t)}{k^2 + l^2} = 1 - \frac{1}{2} (2 \pi k^2) (\Delta t)^2 + O(\Delta t^3) \\
& i \frac{k \sin( 2 \pi \sqrt{k^2 + l^2} \Delta t) }{\sqrt{k^2 + l^2}} = i 2 \pi k \Delta t + O(\Delta t^3) \\
& \cos( 2 \pi \sqrt{k^2 + l^2} \Delta t) = 1 - \frac{1}{2} ( 2 \pi )^2 (k^2 + l^2) (\Delta t)^2 + O(\Delta t^3)
\end{align*}

Compare with entries of $Q_{\mathcal{L}}$, we see $Q_{\mathcal{L}} = e^{\Delta t G} + O(|\sqrt{k^2 + l^2} \Delta t|^2)$, by Theorem-\ref{thm:lax}, the central difference scheme is first order accurate. 

Expand entries of $Q_B$ and note $\lambda_x \Delta x = \Delta t$ and $\lambda_y \Delta y = \Delta t$, the entries are listed as (in the order of $(1, 1), (1, 2), (1, 3), (2, 2), (2, 3), (3, 3)$):
\begin{align*}
1 - \frac{1}{2}\lambda_y^2 (s^y_l)^2 & = 1 - \frac{1}{2} (2 \pi l^2) (\Delta t)^2 + O(\Delta t^3) \\
 \frac{1}{2} \lambda_x \lambda_y s^x_k s^y_l & = \frac{1}{2} ( 2 \pi)^2 k l (\Delta t)^2 + O(\Delta t^3) \\
 - i \lambda_y s^y_l \left( 1 - \frac{1}{2} (\lambda_x^2 (s^x_k)^2 + \lambda_y^2 (s^y_l)^2) \right) & = -i 2 \pi l \Delta t + O(\Delta t^3) \\
 1 - \frac{1}{2}\lambda_x^2 (s^x_k)^2 & = 1 - \frac{1}{2} (2 \pi k^2) (\Delta t)^2 + O(\Delta t^3) \\
 i \lambda_x s^x_k \left( 1 - \frac{1}{2} (\lambda_x^2 (s^x_k)^2 + \lambda_y^2 (s^y_l)^2) \right) & = i 2 \pi k \Delta t + O(\Delta t^3) \\
 1 - \frac{1}{2} (\lambda_x^2 (s^x_k)^2 + \lambda_y^2 (s^y_l)^2) & = 1 - \frac{1}{2} ( 2 \pi )^2 (k^2 + l^2) (\Delta t)^2 + O(\Delta t^3)
\end{align*}

Compare with entries of $e^{\Delta t G}$, we see $Q_B = e^{\Delta t G} + O(|\sqrt{k^2 + l^2} \Delta t|^3)$, by Theorem-\ref{thm:lax}, BFECC based on the central difference scheme is second order accurate.
\end{appendix}

\end{document}